\documentclass[11pt]{article}
\usepackage[T1]{fontenc}
\usepackage[utf8]{inputenc}
\usepackage{amsmath,amsfonts,amssymb,amsthm,mathtools}
\usepackage{enumitem}
\usepackage[margin=1in]{geometry}
\usepackage{microtype}
\usepackage[hidelinks]{hyperref}
\DeclareMathOperator{\spn}{span}
\newcommand{\F}{\mathbb F_2}
\newcommand{\E}{\mathbb E}
\newcommand{\Prob}{\mathbb P}
\newcommand{\Cr}{C_r^*(\F)}
\newcommand{\M}{L(\F)}
\newcommand{\norm}[2]{\left\lVert #1\right\rVert_{#2}}
\newtheorem{theorem}{Theorem}[section]
\newtheorem{proposition}[theorem]{Proposition}
\newtheorem{lemma}[theorem]{Lemma}
\newtheorem{corollary}[theorem]{Corollary}
\theoremstyle{remark}
\newtheorem{remark}[theorem]{Remark}
\numberwithin{equation}{section}
\title{Fourier rearrangements on discrete groups:\\$L^p$ and operator-norm subsequence convergence}
\author{Tao Mei and Sebastian Vargas-Loaiza}
\date{September 14, 2026}

\begin{document}
\maketitle

\begin{abstract}
Let $\Gamma$ be an infinite finitely generated group that is hyperbolic
or of polynomial growth.  For $f$ belonging to the reduced group $C^*$ algebra $C_r^*(\Gamma)$, we construct a rearrangement of its
individual Fourier terms with a subsequence converging to $f$ in the operator norm.
For every $2\le p<\infty$ and
$f$ belonging to the noncommutative $L^p$ space  $ L^p(\hat \Gamma)$, 
we obtain convergence in the $L^p$
norm.  The proof combines uniformly bounded
finite-support Fourier cutoffs with an elementary Bernoulli
even-moment estimate.   We give a self-contained free-group
argument, then establish a general criterion and verify it for the
two classes above. The operator-norm conclusion establishes the
R\'ev\'esz property for these groups. In particular, it confirms the
polynomial-growth conjecture of Hamm, Hayes, and Petrosyan and disproves
their conjecture that the free group $\F$ does not have this property.
\end{abstract}

\section{Introduction and main results}
Let $\F$ be the free group on two generators, let $|g|$ denote reduced
word length, and let $\lambda_g\delta_h=\delta_{gh}$ be its left regular
representation. Its reduced $C^*$-algebra is
\[
 \Cr=\overline{\spn\{\lambda_g:g\in\F\}}^{\,\norm{\cdot}{\infty}}.
\]
The canonical trace on the group von Neumann algebra $\M$ is normalized
by $\tau(1)=1$ and satisfies $\tau(\lambda_g)=\mathbf1_{\{g=e\}}$.
Inner products are taken conjugate-linear in the first variable.
We use the notation
$L^p(\widehat{\F})=L^p(\M,\tau)$, with
$\norm{x}{p}=\tau(|x|^p)^{1/p}$ for $p<\infty$, and
$L^\infty(\widehat{\F})=\M$. For $f\in L^2(\M,\tau)$, write
\[
 c_g=\widehat f(g)=\tau(\lambda_g^*f),\qquad
 f\sim\sum_{g\in\F}c_g\lambda_g.
\]

The metric approximation property of reduced free group $C^*$-algebras
was proved by Haagerup \cite{Ha79}. Nevertheless, convergence of the
unweighted Fourier partial sums of a given element remains delicate.
Bo\.{z}ejko and Fendler \cite{BF06} showed that, for $p>3$, the natural
Dirichlet-type partial-sum operators do not yield norm convergence for
every element of the free group noncommutative $L^p$ space. The
operator-norm problem is subtler still. Motivated by R\'ev\'esz's
classical theorem \cite{Re90}, one may ask whether, for each
$f\in C_r^*(\F)$, the free group elements can be enumerated so that a
subsequence of the corresponding Fourier partial sums converges to $f$.
We answer this question affirmatively. This also disproves a conjecture
of Hamm, Hayes, and Petrosyan \cite{HHP19}.
Beyond its intrinsic interest, the convergence of Fourier partial sums has also arisen in the recent work of Andreou, in which the author requires only a weaker condition called the $D_\infty$-property; see Proposition 5.7 in \cite{An23}. 

All rearrangements below keep these coefficients unchanged.

\begin{theorem}\label{thm:main}
Fix $2\le p<\infty$. For every $f\in L^p(\widehat{\F})$, there
exist an enumeration $(g_\ell)_{\ell\ge1}$
of $\F$ and strictly increasing positive integers $M_j$ such that
\begin{equation}\label{eq:main}
 \lim_{j\to\infty}\norm{\sum_{\ell=1}^{M_j}
              \widehat f(g_\ell)\lambda_{g_\ell}-f}{p}=0.
\end{equation}
The same subsequence converges in every $L^q$, $1\le q\le p$.
More precisely, there are integers $n_{j+1}>2n_j$ and finite sets
$E_j\subset\F$ such that
\begin{equation}\label{eq:sets}
 \{|g|\le n_j\}\subset E_j\subset\{|g|\le2n_j\},\qquad
 E_j\subsetneq E_{j+1},\qquad \bigcup_jE_j=\F,
\end{equation}
and
\begin{equation}\label{eq:rate}
 \norm{\sum_{g\in E_j}\widehat f(g)\lambda_g-f}{p}
 \le 2^{1-j}.
\end{equation}
The enumeration can be chosen so that its first $M_j=|E_j|$ elements
are exactly $E_j$.
\end{theorem}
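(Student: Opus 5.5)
The plan is to combine a smooth radial cutoff with a random $0/1$ rounding of it. There are three ingredients: uniformly bounded finite-support radial multipliers $m_n$, a Bernoulli even-moment bound, and a diagonal choice of the scales $n_j$.

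\emph{Step 1 (cutoffs).} For $n\ge1$ let $m_n(g)=\varphi_n(|g|)$, where $\varphi_n(k)=1$ for $k\le n$, $\varphi_n$ decreases linearly to $0$ on $[n,2n]$, and $\varphi_n(k)=0$ for $k\ge2n$. The associated Fourier multiplier is $M_n:\lambda_g\mapsto m_n(g)\lambda_g$. I would use the Haagerup / Haagerup--Szwarc criterion for radial multipliers on $\F$, which bounds the completely bounded multiplier norm by a quantity of the form $\sum_k (k+1)\,|\varphi(k)-2\varphi(k+1)+\varphi(k+2)|$ plus a boundary term. For a piecewise linear $\varphi_n$ the second differences are nonzero only at $k\approx n$ and $k\approx 2n$, each of size about $1/n$, so this sum is $O(1)$ uniformly in $n$. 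Since the cb norm on $\M$ is then uniformly bounded, the same bound holds on $L^1$ by duality, because the multiplier is symmetric. Interpolation gives $\sup_n\|M_n\|_{L^p\to L^p}\le C$. As $M_nh=h$ for every finite sum $h$ supported in $\{|g|\le n\}$, and finite sums are dense in $L^p$ for $p<\infty$, we get $\|M_nf-f\|_p\to0$. I expect this step to be the main obstacle: the rest of the argument is group-free, and this is the point where hyperbolicity of $\F$ enters. If the cited criterion is not directly usable, the fallback is to build $\varphi_n$ from Haagerup's positive definite functions $e^{-t|g|}$ multiplied by a bounded-cost truncation.

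\emph{Step 2 (random rounding and moment estimate).} Let $(\epsilon_g)$ be independent Bernoulli variables with $\Prob(\epsilon_g=1)=m_n(g)$, and set $E=\{g:\epsilon_g=1\}$. Then $\{|g|\le n\}\subset E\subset\{|g|<2n\}$ deterministically, and
\[
\sum_{g\in E}c_g\lambda_g-f=(M_nf-f)+X,\qquad X=\sum_{n<|g|<2n}(\epsilon_g-m_n(g))c_g\lambda_g .
\]
Fix an integer $k$ with $2k\ge p$. Expanding $\E\,\tau((X^*X)^k)$ gives a sum over $2k$-tuples of group elements. By independence and centering, only tuples in which each index occurs at least twice contribute. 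Drop the constraint that the product of the group elements equals $e$; this only enlarges the bound, since each trace value is $0$ or $1$. Use $|\epsilon-m|\le1$, and for each block of size $s\ge2$ use $\sum|c_g|^s\le(\sum|c_g|^2)^{s/2}$. This yields
\[
\E\|X\|_{2k}^{2k}\le B_{2k}\,T_n^{2k},\qquad T_n^2=\sum_{|g|>n}|c_g|^2,
\]
where $B_{2k}$ counts set partitions of $2k$ points. By Markov's inequality there is a realization $E$ with $\|X\|_{p}\le\|X\|_{2k}\le 2B_{2k}^{1/2k}T_n$. Since $f\in L^p\subset L^2$, we have $T_n\to0$.

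\emph{Step 3 (diagonal choice and enumeration).} Choose $n_1<n_2<\dots$ inductively with $n_{j+1}>2n_j$ and $n_j$ large enough that $\|M_{n_j}f-f\|_p+2B_{2k}^{1/2k}T_{n_j}\le2^{1-j}$. Then pick $E_j$ as in Step 2. This gives \eqref{eq:rate} and the inclusions \eqref{eq:sets}. In particular $E_j\subset\{|g|\le2n_j\}\subset\{|g|\le n_{j+1}\}\subset E_{j+1}$, and the inclusion is strict because $E_{j+1}$ contains the whole ball of radius $n_{j+1}>2n_j$. Also $\bigcup_jE_j=\F$. The enumeration lists $E_1$ first, then $E_2\setminus E_1$, and so on, so its first $M_j=|E_j|$ elements are exactly $E_j$; this gives \eqref{eq:main}. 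Finally, $\|\cdot\|_q\le\|\cdot\|_p$ for $q\le p$ because $\tau(1)=1$, which gives convergence in every $L^q$, $1\le q\le p$.
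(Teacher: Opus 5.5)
The gap is in Step~1: piecewise-linear radial cutoffs are \emph{not} uniformly bounded on $\M$, and the criterion you quote does not hold on the tree.

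By Haagerup--Steenstrup--Szwarc, together with the Bo\.{z}ejko--Fendler identification of Fourier and Schur multipliers, a finitely supported radial multiplier $\varphi(|g|)$ on $\F$ has cb norm exactly $\|H\|_{S_1}$. Here $H=(\varphi(i+j)-\varphi(i+j+2))_{i,j\ge0}$, and the boundary constants vanish because the support is finite. For your $\varphi_n$ one has $\varphi_n(k)-\varphi_n(k+2)=2/n$ for $n\le k\le 2n-2$. So the compression of $H$ to $0\le i,j\le n$ equals $\frac2n(\mathbf 1_{\{i+j\ge n\}})_{i,j}$, up to an error of trace norm $O(1)$. This is a flipped triangular all-ones matrix of size $n+1$. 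Its singular values are $\bigl(2\sin\frac{(2k-1)\pi}{4n+6}\bigr)^{-1}$ for $1\le k\le n+1$, and they sum to about $\frac n\pi\log n$. Hence $\|M_n\|_{\mathrm{cb}}\ge\frac2\pi\log n-O(1)$, and your interpolation through $\M$ yields only $(\log n)^{1-2/p}$ on $L^p$.

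The condition $\sum_k(k+1)|\Delta^2\varphi(k)|<\infty$ is the classical Fej\'er-kernel criterion on $\mathbb Z$. On the free group every kink of the profile costs a factor $\log n$; already $(1-k/K)_+$ has Hankel trace norm of order $\log K$. Your fallback does not repair this. The function $e^{-t|g|}$ is not identically $1$ on $\{|g|\le n\}$, so you lose $\{|g|\le n\}\subset E$, and the rounding would have to run over the whole ball, whose coefficient energy is not small. A bounded-cost truncation is exactly the missing ingredient.

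What is needed is a profile that is $C^{1,1}$ at scale $n$. The paper uses the cubic Hermite $F$ in \eqref{eq:cutoff} with $p_N(k)=F(k/N)$. Then $a_N(j)=F(j/N)-F((j+2)/N)$ satisfies $\|a_N\|_{\ell^2}\lesssim N^{-1/2}$ and $\|\Delta a_N\|_{\ell^2}\lesssim N^{-3/2}$. For your profile, $\Delta a_N$ has four jumps of size $1/n$, so $\|\Delta a_N\|_{\ell^2}=2/n$; this is exactly where the argument breaks. With the smooth profile:
\begin{enumerate}[label=\textup{(\alph*)},leftmargin=*]
\item splitting the integral at $|\theta|=1/N$ gives $\|w_N\|_{L^1}\lesssim1/N$ (Lemma~\ref{lem:fourier});
\item the rank-one representation $H_N=\int w_N(\theta)v_\theta v_\theta^{\mathsf T}\,d\mu(\theta)$ then gives $\|H_N\|_{S_1}\le2N\|w_N\|_{L^1}=O(1)$;
\item a factorization along geodesic rays toward a fixed end turns this into the uniform cb bound of Proposition~\ref{prop:cutoff};
\item interpolation with the trivial $L^2$ bound finishes, so no duality to $L^1$ is needed.
\end{enumerate}

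Your Steps~2 and~3 are fine. The partition-counting moment bound is a valid substitute for the paper's symmetrization and $(2\kappa-1)!!$ estimate. With a $C^{1,1}$ cutoff in place of the piecewise-linear one, your argument goes through as written.
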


\begin{theorem}[Operator-norm convergence]\label{thm:operator}
For every $f\in\Cr$, there exist an enumeration
$(g_\ell)_{\ell\ge1}$ of $\F$ and strictly increasing positive
integers $M_j$ such that
\begin{equation}\label{eq:operator}
 \lim_{j\to\infty}\norm{\sum_{\ell=1}^{M_j}
              \widehat f(g_\ell)\lambda_{g_\ell}-f}{\infty}=0.
\end{equation}
The selected sets $E_j=\{g_1,\ldots,g_{M_j}\}$ can satisfy
\eqref{eq:sets} and
\begin{equation}\label{eq:operatorrate}
 \norm{\sum_{g\in E_j}\widehat f(g)\lambda_g-f}{\infty}
 \le2^{1-j}.
\end{equation}
The same subsequence then converges in every finite $L^q$,
$1\le q<\infty$.
\end{theorem}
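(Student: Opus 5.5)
The plan is to build $E_j$ as a random set sandwiched between two balls, so that the expected Fourier cutoff is a \emph{uniformly bounded smooth radial multiplier} and the random fluctuation is controlled by an even-moment (Khintchine-type) estimate turned into an operator-norm bound through Haagerup's inequality. For $n\ge1$ let $\varphi_n(g)=1$ for $|g|\le n$, $\varphi_n(g)=(2n-|g|)/n$ for $n<|g|\le2n$, and $\varphi_n(g)=0$ otherwise. I would first check that the radial multipliers $T_{\varphi_n}$ are completely bounded on $\Cr$ with $\sup_n\|T_{\varphi_n}\|_{cb}\le C$. I would get this from the Haagerup--Szwarc characterization of radial Herz--Schur multipliers: the piecewise-linear profile has bounded $\sum_k k|\varphi(k)-2\varphi(k+1)+\varphi(k+2)|$, since the second differences are nonzero only at $k\approx n,2n$ with size $1/n$.

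Next, fix $f\in\Cr$ and $\varepsilon>0$. Choose a finitely supported $P$ with $\|f-P\|_\infty<\varepsilon$, supported in $\{|g|\le N\}$. For $n\ge N$ I take independent Bernoulli variables $\xi_g\in\{0,1\}$ with $\Prob(\xi_g=1)=\varphi_n(g)$, and set $E=\{g:\xi_g=1\}$, so that $\{|g|\le n\}\subset E\subset\{|g|\le 2n\}$. Writing $S_E f=\sum_{g\in E}\widehat f(g)\lambda_g$, we have
\[
 S_Ef-f=\bigl(T_{\varphi_n}(f-P)-(f-P)\bigr)+X,\qquad X=\sum_{n<|g|\le2n}(\xi_g-\varphi_n(g))\,\widehat f(g)\lambda_g .
\]
Here I used $T_{\varphi_n}P=P$. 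The first bracket has norm at most $(C+1)\varepsilon$.

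For the fluctuation, the centered variables $\xi_g-\varphi_n(g)$ are independent, mean zero and bounded by $1$. A direct expansion of $\E\,\tau\bigl((X^*X)^m\bigr)$, keeping only pairings (the Bernoulli even-moment estimate), should give
\[
 \E\|X\|_{2m}^{2m}\le (Km)^m a_n^{m},\qquad a_n:=\sum_{n<|g|\le2n}|\widehat f(g)|^2 .
\]
Here I would use that for unitaries $\lambda_g$ both row and column square functions equal $a_n^{1/2}$. To pass to the operator norm, I would apply Haagerup's inequality to $(X^*X)^m$, which is supported in the ball of radius $4mn$:
\[
 \|X\|_\infty^{2m}=\|(X^*X)^m\|_\infty\le(4mn+1)^{3/2}\|X\|_{4m}^{2m}.
\]
Taking $m\asymp\log n$ makes the polynomial factor $O(1)$ after extracting the $4m$-th root. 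Chebyshev's inequality then yields a realization of $E$ with $\|X\|_\infty\le K'\sqrt{\log n}\,a_n^{1/2}$.

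The main obstacle is exactly this $\sqrt{\log n}$ loss, because $a_n\to0$ does not by itself make $\sqrt{\log n}\,a_n^{1/2}$ small. I would handle it by choosing the scale cleverly. Take $n=2^k$ dyadic; the annuli $\{2^k<|g|\le2^{k+1}\}$ are disjoint, so $\sum_k a_{2^k}\le\|f\|_2^2<\infty$. Hence $\liminf_k k\,a_{2^k}=0$, and there are arbitrarily large $k$ with $\sqrt{k}\,a_{2^k}^{1/2}<\varepsilon$. Choosing such a $k$ with $2^k\ge N$ gives $\|S_Ef-f\|_\infty\le(C+1+K')\varepsilon$.

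Finally, I iterate with $\varepsilon_j\asymp2^{-j}$ and pick $n_{j+1}>2n_j$ along these good dyadic scales. To get $E_j\subsetneq E_{j+1}$, I note that $E_j\subset\{|g|\le2n_j\}\subset\{|g|\le n_{j+1}\}\subset E_{j+1}$, and the containment is strict since the balls grow; moreover $\bigcup_j E_j=\F$. Listing $E_1$, then $E_2\setminus E_1$, and so on gives the enumeration with $M_j=|E_j|$. Convergence in every $L^q$, $1\le q<\infty$, follows from $\|\cdot\|_q\le\|\cdot\|_\infty$.
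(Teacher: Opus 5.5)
There is a genuine gap in your first step: the piecewise-linear cutoffs $T_{\varphi_n}$ are \emph{not} uniformly completely bounded on $\Cr$, and the criterion you invoke is not the right one on the free group. By the Haagerup--Steenstrup--Szwarc theorem \cite{HSS}, a radial $\varphi$ is a (Herz--)Schur multiplier on the tree if and only if the Hankel matrix $H=(\varphi(i+j)-\varphi(i+j+2))_{i,j\ge0}$ is trace class. For finitely supported $\varphi$, the cb norm is comparable to $\norm{H}{S_1}$. The weighted sum $\sum_k k|\varphi(k)-2\varphi(k+1)+\varphi(k+2)|$ is the P\'olya/de la Vall\'ee Poussin quantity that works on $\mathbb Z$. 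On $\F$ the corresponding bound carries an extra factor $\log k$, coming from the trace norm of triangular matrices.

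Concretely, for your trapezoid, $\varphi_n(k)-\varphi_n(k+2)=2/n$ for $n\le k\le 2n-2$. So the block of $H$ with rows $0\le i<n/2$ and columns $n/2\le j<n$ equals $\frac2n\mathbf 1_{\{i+j\ge n\}}$ plus one anti-diagonal of entries $1/n$. That is, it is $2/n$ times a triangular all-ones matrix of size about $n/2$, up to an error of trace norm at most $1/2$. A triangular all-ones $M\times M$ matrix has trace norm $\asymp M\log M$, and compression does not increase trace norm, so $\norm{T_{\varphi_n}}{\mathrm{cb}}\gtrsim\log n$. Already the triangles $(1-|g|/m)_+$ have cb norm $\asymp\log m$ on $\F$.

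Your bound ``the first bracket has norm at most $(C+1)\varepsilon$'' therefore becomes $(c\log n+1)\varepsilon$, and this cannot be absorbed. The scale $n$ has to be taken along good dyadic scales, possibly far beyond $\deg P$. For a general $f\in\Cr$, nothing makes $\log n\cdot\inf_{\deg P\le n}\norm{f-P}{\infty}$ small. The kinks of the profile at $|g|=n$ and $|g|=2n$ are exactly what cost the logarithm.

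The repair is precisely the paper's choice of a $C^{1,1}$ profile, for instance the cubic $F$ in \eqref{eq:cutoff}. Then $a_N(k)=F(k/N)-F((k+2)/N)$ is a Lipschitz bump with $|a_N|\le 3/N$ and $|a_N(k)-a_N(k-1)|\le 12/N^2$. Hence its symbol has $L^1(\mathbb T)$ norm $O(1/N)$ (Lemma~\ref{lem:fourier}), and $\norm{H_N}{S_1}=O(1)$. The paper then writes the Schur factorization on the tree explicitly (Proposition~\ref{prop:cutoff}).

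With that cutoff in place of $\varphi_n$, and rounding probabilities $F(|g|/n)$, the rest of your argument is essentially the paper's proof. This covers the Bernoulli moment bound, Haagerup's inequality for $(X^*X)^m$ with $m\asymp\log n$, the good dyadic scales from $\liminf_k k\,a_{2^k}=0$, and the enumeration.

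A minor point: for centered, non-symmetric Bernoulli variables, blocks of size $\ge 3$ also survive the trace expansion, not only pairings. Symmetrizing first, as in Lemma~\ref{lem:rounding}, gives the clean $(2\kappa-1)!!$ constant.
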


\begin{corollary}[Simultaneous convergence]\label{cor:simultaneous}
If $f\in\bigcap_{2\le q<\infty}L^q(\widehat{\F})$, there are one
enumeration and one subsequence whose partial sums converge to $f$
in every finite $L^q$, $1\le q<\infty$. This holds, in particular,
for every $f\in\M$. The selected sets may satisfy \eqref{eq:sets} and
\begin{equation}\label{eq:simrate}
 \norm{\sum_{g\in E_j}\widehat f(g)\lambda_g-f}{2j}\le2^{1-j}.
\end{equation}
\end{corollary}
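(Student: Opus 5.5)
We prove Corollary~\ref{cor:simultaneous} by re-running the inductive set-selection behind Theorem~\ref{thm:main}, but letting the exponent grow with the stage: at stage $j$ we control the $L^{2j}$ norm instead of a fixed $L^p$ norm. We take as given the single-stage statement underlying Theorem~\ref{thm:main}. It says: for fixed $2\le p<\infty$, $f\in L^p(\widehat{\F})$, $\varepsilon>0$ and any $N$, there are $n>N$ and a finite set $E$ with
\[
\{|g|\le n\}\subset E\subset\{|g|\le 2n\}
\quad\text{and}\quad
\norm{\sum_{g\in E}\widehat f(g)\lambda_g-f}{p}\le\varepsilon .
\]
The construction of Theorem~\ref{thm:main} is an iteration of this step. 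The step itself comes from combining the uniformly bounded finite-support cutoffs, which approximate $f$ in $L^p$ using only $\{|g|\le 2n\}$, with the Bernoulli even-moment estimate. That estimate lets one round the cutoff multipliers on the annulus $n<|g|\le2n$ to $0/1$ values at a cost measured in $L^{p}$, with constants depending on $p$. Since $2j$ is an even integer, the even-moment estimate applies in its most natural form at each stage.

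The induction runs as follows. Set $n_0=0$ and $E_0=\emptyset$. Given $n_{j-1}$ and $E_{j-1}\subset\{|g|\le 2n_{j-1}\}$, apply the single-stage statement with $p=2j$, $\varepsilon=2^{1-j}$, and $N>2n_{j-1}$ (also larger than the $j$-th word in a fixed enumeration of $\F$). This is legitimate because $f\in L^{2j}(\widehat{\F})$ by hypothesis. The output is $n_j>2n_{j-1}$ and $E_j$ satisfying \eqref{eq:simrate}. Because $\{|g|\le n_j\}\supset\{|g|\le 2n_{j-1}\}\supset E_{j-1}$, the containment $E_{j-1}\subsetneq E_j$ is automatic: strictness holds since the ball strictly grows. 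The choice of $N$ also forces $\bigcup_j E_j=\F$, so \eqref{eq:sets} holds. We then enumerate $\F$ by listing $E_1$, then $E_2\setminus E_1$, and so on, and put $M_j=|E_j|$.

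To get convergence in every finite $L^q$, fix $1\le q<\infty$. For $j\ge q/2$, monotonicity of noncommutative $L^p$ norms for a normalized trace gives $\norm{\cdot}{q}\le\norm{\cdot}{2j}$. Hence
\[
\norm{\sum_{E_j}\widehat f(g)\lambda_g-f}{q}\le 2^{1-j}\to0 .
\]
If $f\in\M$, then $f\in L^q$ for every $q$ with $\norm{f}{q}\le\norm{f}{\infty}$, so the hypothesis is met.

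The main obstacle is the single-stage step at a stage-dependent exponent. The even-moment constant grows with $p=2j$, and the cutoff constants may depend on $p$ as well. This causes no difficulty, because at each stage $p$ is fixed before $n_j$ is chosen, and we only need some sufficiently large $n_j$. So we must only check that the single-stage argument of Theorem~\ref{thm:main} is phrased with "$n$ larger than any prescribed $N$", which is how the rate \eqref{eq:rate} with $n_{j+1}>2n_j$ is produced there.
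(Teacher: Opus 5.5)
Your proposal is correct and is essentially the paper's proof. At stage $j$ the paper reruns the construction of Theorem~\ref{thm:main} with exponent $2j$: it chooses $n_j>2n_{j-1}$ with $\norm{P_{n_j}f-f}{2j}\le2^{-j}$ and $K_{2j}\sqrt{T_{n_j}}\le2^{-j}$, applies Corollary~\ref{cor:annulus} with $\kappa=j$, and concludes by norm monotonicity, exactly as you do. The single-stage step you flag as needing a check also follows directly from the statement of Theorem~\ref{thm:main} with $p=2j$, by taking an index $i$ late enough that $n_i>N$ and $2^{1-i}\le\varepsilon$.
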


The free-group results extend to the following classes. For a discrete
group $\Gamma$ we write $L^p(\widehat\Gamma)=L^p(L(\Gamma),\tau)$,
use its canonical trace, and define Fourier coefficients as above.
For a fixed finite symmetric generating set, let
$B_N^\Gamma=\{g\in\Gamma:|g|\le N\}$.

\begin{theorem}[Hyperbolic groups and polynomial growth]\label{thm:groups}
Let $\Gamma$ be an infinite finitely generated group that is either
word-hyperbolic or of polynomial growth. Then:
\begin{enumerate}[label=\textup{(\roman*)},leftmargin=*]
\item For each $2\le p<\infty$ and $f\in L^p(\widehat\Gamma)$,
there are an enumeration $(g_\ell)_{\ell\ge1}$ of $\Gamma$ and
strictly increasing integers $M_j$ such that
\[
 \norm{\sum_{\ell=1}^{M_j}\widehat f(g_\ell)\lambda_{g_\ell}-f}{p}
 \longrightarrow0.
\]
The same subsequence converges in every $L^q$, $1\le q\le p$.
\item For each $f\in C_r^*(\Gamma)$, an enumeration and a
subsequence can be chosen with
\[
 \norm{\sum_{\ell=1}^{M_j}\widehat f(g_\ell)\lambda_{g_\ell}-f}{\infty}
 \longrightarrow0.
\]
\end{enumerate}
In either case the selected sets $E_j=\{g_1,\ldots,g_{M_j}\}$ may
be chosen with error at most $2^{1-j}$ in the asserted norm and with
\begin{equation}\label{eq:groupssets}
 B_{n_j}^\Gamma\subset E_j\subset B_{a n_j}^\Gamma,\qquad
 n_{j+1}>a n_j,\qquad E_j\subsetneq E_{j+1},\qquad
 \bigcup_j E_j=\Gamma.
\end{equation}
Here $a=2$ is available in the hyperbolic case and $a=3$ in the
polynomial-growth case. The simultaneous-convergence assertion of
Corollary~\ref{cor:simultaneous} also holds with $\Gamma$ in place
of $\F$; in particular it holds for every $f\in L(\Gamma)$.
\end{theorem}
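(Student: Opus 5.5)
The plan is to follow the free-group argument (Theorems~\ref{thm:main} and \ref{thm:operator}) and isolate the two group-specific inputs as a general criterion. \textbf{(A) Uniformly bounded cutoffs:} there are functions $m_N:\Gamma\to[0,1]$ with $m_N=1$ on $B_N^\Gamma$, $\operatorname{supp}m_N\subset B_{aN}^\Gamma$, and Fourier multipliers $T_N(\sum c_g\lambda_g)=\sum m_N(g)c_g\lambda_g$ that are completely bounded on $C_r^*(\Gamma)$, hence on every $L^p(\widehat\Gamma)$, with $\sup_N\|T_N\|_{cb}\le C$. \textbf{(B) A moment-to-sup comparison} for elements supported in $B^\Gamma_{aN}$, namely property RD, used in the operator-norm case. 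For hyperbolic groups, (A) comes from Ozawa's weak amenability with truncation (radial-type functions with $a=2$), and (B) is de la Harpe/Jolissaint rapid decay. For polynomial growth, (B) is trivial because balls have polynomial size. For (A), I would take $m_N$ to be a normalized convolution $|B_N|^{-1}\mathbf 1_{B_{2N}}*\check{\mathbf 1}_{B_N}$, suitably rescaled. This is a positive definite function whose Fourier--Stieltjes norm is $\le |B_{2N}|/|B_N|\le C$ by doubling, and it gives $a=3$.

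Given (A), fix $f$, $\varepsilon>0$ and $j$. I would choose a finite-support $h$ with $\|f-h\|\le\varepsilon$ in the relevant norm and $N\ge n_{j-1}$ large with $\operatorname{supp}h\subset B_N$. Next I would pick independent Bernoulli variables $\xi_g$ with $\mathbb P(\xi_g=1)=m_N(g)$ for $g$ in the annulus $B_{aN}\setminus B_N$, and set $E=B_N\cup\{g:\xi_g=1\}$. Then
\[
 S_Ef-f = T_N(f-h)-(f-h) + X,\qquad X=\sum_g(\xi_g-m_N(g))\,\widehat f(g)\lambda_g .
\]
The first two terms are at most $(C+1)\varepsilon$. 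The random term $X$ is handled by the even-moment estimate. Expanding $\E\,\tau(|X|^{2k})$ and using independence and mean zero of $\xi_g-m_N(g)$, only pairings survive, which gives
\[
 \bigl(\E\|X\|_{2k}^{2k}\bigr)^{1/2k}\le C\sqrt k\,\Bigl(\sum_{N<|g|\le aN}|\widehat f(g)|^2\Bigr)^{1/2}.
\]
This tends to $0$ as $N\to\infty$ for fixed $k$. So for $L^p$ with $2k\ge p$, some realization of $E$ gives error $\le 2^{1-j}$. Iterating with $n_{j+1}>aN$ and forcing $E_j\subsetneq E_{j+1}$ by adding one point yields \eqref{eq:groupssets}. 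Interleaving the $E_j$ into one enumeration gives (i). Taking $k=j$ at stage $j$ gives the simultaneous version, since every $f\in L(\Gamma)$ lies in all $L^q$.

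For (ii), I would bound $\|X\|_\infty$ using (B). If $x$ is supported in $B_{M}$, then $(x^*x)^k$ is supported in $B_{2kM}$, so RD gives $\|x\|_\infty^{2k}\le C(2kM)^D\|x\|_{4k}^{2k}$. With $k\approx\log N$, this makes $\|X\|_\infty\lesssim\|X\|_{4k}$ up to a bounded factor. Combined with the moment bound, this gives $\|X\|_\infty\lesssim\sqrt{\log N}\,\bigl(\sum_{|g|>N}|\widehat f(g)|^2\bigr)^{1/2}$.

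\textbf{The main obstacle} is that this tail may decay slower than $1/\sqrt{\log N}$. My first attempt would be to bound $X$ by the coefficients of the already small element $f-h$ rather than of $f$, since $\widehat f=\widehat{(f-h)}$ on the annulus. I would then replace the $\ell^2$ bound by a noncommutative Khintchine bound in terms of $\|T'(f-h)\|$ for a bounded annular multiplier $T'=T_{N'}-T_N$. Alternatively, I would randomize in dyadic shells, each with its own cutoff, so that the $\sqrt{\log}$ losses are summable against $\varepsilon$. Checking that one of these variants closes the estimate is the step I expect to be hardest.
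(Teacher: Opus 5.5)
Your overall architecture is the paper's: a uniformly bounded plateau cutoff plus a polynomial rapid-decay bound, Bernoulli rounding on the transition annulus, even moments for finite $p$, and the high-moment comparison with $k\approx\log N$ for the operator norm. Your finite-$p$ and simultaneous arguments are complete.

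\textbf{The gap in (ii).} The genuine gap is part (ii), which you leave open, and neither proposed repair works. Noncommutative Khintchine cannot help. For sums of the unitaries $\lambda_g$, both square functions reduce to $(\sum_g|c_g|^2)^{1/2}$, and this is the same for $f$ and for $f-h$ on the annulus. Moreover, averaged operator norms of random sign sums are not controlled by $\|T'(f-h)\|_\infty$: already on $\mathbb Z$, Rudin--Shapiro polynomials lose a $\sqrt{\log(\text{degree})}$ factor under random signs. Randomizing on several shells only accumulates more losses.

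The missing idea is to stop bounding the annular energy $V_N=\sum_{N<|g|\le aN}|\widehat f(g)|^2$ by the full tail, and to exploit your freedom in choosing $N$. Your approximation error $\|T_N(f-h)-(f-h)\|_\infty\le(C+1)\varepsilon$ holds for every $N$ with $\operatorname{supp}h\subset B_N$, so:
\begin{itemize}
\item Restrict to $N=a^k$. The annuli $B_{a^{k+1}}\setminus B_{a^k}$ are disjoint, so $\sum_kV_{a^k}\le\|f\|_2^2$.
\item Hence $\liminf_k(k+1)V_{a^k}=0$. Otherwise $V_{a^k}\ge c/(k+1)$ eventually, contradicting divergence of the harmonic series.
\item Since $\log(a^{k+1}+1)\lesssim k+1$, arbitrarily large $k$ make $\sqrt{\log(aN+1)\,V_N}$ as small as desired at $N=a^k$. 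Choosing such a scale at each stage closes (ii).
\end{itemize}
The slow decay of the tail is irrelevant, because the $\sqrt{\log}$ loss is paid on only one annulus per stage, and you choose which one.

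\textbf{The hyperbolic cutoffs.} Your verification of (A) for hyperbolic groups is also insufficient. Ozawa's estimates are uniform cb bounds for $r^{|x|}$ and $\|\chi_n\|_{\mathrm{cb}}\le C(n+1)$ for sphere indicators. They yield the weak-amenability approximants $\sum_{n\le M}r^n\chi_n$, which have no plateau. Applied to a cutoff such as $F(|x|/N)$, the sphere bound only gives norms growing with $N$.

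You need precisely the shape stated in (A):
\begin{itemize}
\item value $1$ on $B_N$, since otherwise you must also round inside $B_N$, where the coefficient energy of $f$ is not small;
\item values in $[0,1]$, so that they can serve as Bernoulli means;
\item support in $B_{aN}$ with $a$ fixed, so that the loss is $\sqrt{\log N}$ and the annuli at scales $a^k$ are disjoint.
\end{itemize}
The paper obtains this from the Mei--de la Salle radial criterion. It bounds $\|F(|\cdot|/N)\|_{\mathrm{cb}}$ by a group constant times the trace norm of the first-difference Hankel matrix $\bigl(F((i+j)/N)-F((i+j+1)/N)\bigr)_{i,j\ge0}$. That trace norm is then shown to be uniformly bounded using the $C^{1,1}$ smoothness of $F$: a rank-one integral representation plus an $L^1$ estimate of the symbol, split at $|\theta|=1/N$.

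\textbf{Polynomial growth.} Your cutoff is exactly the paper's $q_N(g)=|gB_N\cap B_{2N}|/|B_N|$, and no rescaling is needed. It is the coefficient $\langle\eta,\lambda_g\xi\rangle$ with $\xi=|B_N|^{-1/2}\mathbf 1_{B_N}$ and $\eta=|B_N|^{-1/2}\mathbf 1_{B_{2N}}$. Since $\xi\ne\eta$, it is not positive definite. However, the resulting bound $\sqrt{|B_{2N}|/|B_N|}$ on its cb norm is all that is required.
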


For a finite group, the corresponding conclusions hold simply by
taking the full finite Fourier sum, which equals $f$.

Hamm, Hayes, and Petrosyan \cite[Definition~4]{HHP19} say that a
countable discrete group has the \emph{R\'ev\'esz property} if every
$f\in C_r^*(\Gamma)$ admits a nested exhaustion by finite sets $E_j$
whose Fourier sums converge to $f$ in operator norm. For an infinite
group this is equivalent to the rearranged-subsequence formulation:
discard repeated sets and enumerate their successive differences.
In Section~III of their 2019 paper, immediately
after Theorem~III.1, they conjecture that $\F$ does not have this
property and that groups of polynomial growth do have it.

\begin{corollary}[The Hamm--Hayes--Petrosyan conjectures]\label{cor:conjectures}
Every finitely generated hyperbolic group and every finitely generated
group of polynomial growth has the R\'ev\'esz property. In particular,
$\F$ has the property. Thus the first conjecture in
\cite[Section~III]{HHP19} is false and the second is true.
\end{corollary}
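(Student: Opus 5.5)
The corollary follows directly from Theorem~\ref{thm:groups}(ii) and Theorem~\ref{thm:operator}, so the plan is to translate the rearranged-subsequence conclusion into the nested-exhaustion formulation of \cite[Definition~4]{HHP19}.

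Let $\Gamma$ be finitely generated and either hyperbolic or of polynomial growth. First suppose $\Gamma$ is infinite, and fix $f\in C_r^*(\Gamma)$. Theorem~\ref{thm:groups}(ii) provides finite sets $E_j=\{g_1,\dots,g_{M_j}\}$ with the following properties:
\[
 \norm{\sum_{g\in E_j}\widehat f(g)\lambda_g-f}{\infty}\le 2^{1-j},
\]
$E_j\subsetneq E_{j+1}$, and $\bigcup_jE_j=\Gamma$, as recorded in \eqref{eq:groupssets}. This is exactly a nested exhaustion of $\Gamma$ by finite sets whose Fourier sums converge to $f$ in operator norm. Hence $\Gamma$ has the R\'ev\'esz property.

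If $\Gamma$ is finite, take $E_j=\Gamma$ for all $j$. The full Fourier sum equals $f$, because $L(\Gamma)$ is spanned by the $\lambda_g$ and $f=\sum_g\tau(\lambda_g^*f)\lambda_g$ by orthonormality of the $\lambda_g$ in $L^2$. The constant sequence is a nested exhaustion, so the property holds trivially in this case as well.

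For the free group, $\F$ is hyperbolic, since it acts on its Cayley tree, which is $0$-hyperbolic. So the previous case applies. Alternatively, Theorem~\ref{thm:operator} gives the sets directly, with \eqref{eq:sets} and \eqref{eq:operatorrate}. Therefore $\F$ has the R\'ev\'esz property, and the first conjecture of \cite[Section~III]{HHP19}, that $\F$ does not have it, is false. The second conjecture, that groups of polynomial growth have the property, is the polynomial-growth case above (finite groups included).

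There is no genuine obstacle. The only points needing care are:
\begin{itemize}
\item matching the definition in \cite{HHP19}, which requires nested finite sets exhausting $\Gamma$ with operator-norm convergence of $\sum_{g\in E_j}\widehat f(g)\lambda_g$, and not the enumeration form;
\item handling finite groups separately, since Theorem~\ref{thm:groups} assumes $\Gamma$ is infinite.
\end{itemize}
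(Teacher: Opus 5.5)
Your proposal is correct and follows the paper's proof, which applies Theorem~\ref{thm:groups}(ii) to infinite groups and uses the observation that for a finite group the full Fourier sum equals $f$. Your explicit matching of the nested sets $E_j$ to the Hamm--Hayes--Petrosyan definition, and your alternative route for $\F$ via Theorem~\ref{thm:operator}, are consistent with that argument.
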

\begin{proof}
Apply Theorem~\ref{thm:groups}\textup{(ii)}, with the preceding
observation for finite groups.
\end{proof}

We first prove the free-group statements directly. In this case,
the cutoffs used in the proof keep every coefficient of length at most
$N$, taper the coefficients between $N$ and $2N$, and vanish beyond
$2N$. Uniform boundedness and density give convergence of these
weighted sums in finite $L^p$, and in operator norm on $\Cr$.
We then replace the weights on each transition annulus by independent
$0$--$1$ choices. For finite $p$, the rounding error is controlled
directly in $L^p$. For the operator norm, we apply Haagerup's inequality
to powers of the rounding error and take the moment exponent
proportional to the logarithm of its degree. A dyadic annular-energy
argument makes the resulting error arbitrarily small along suitable
scales. Finally, separated annuli allow the selected sets to be
assembled into a single enumeration.
Section~\ref{sec:criterion} isolates the two hypotheses behind
this construction, and Section~\ref{sec:extensions} verifies them
for the two classes in the main theorem. Hyperbolic groups satisfy
them by the radial
multiplier theorem of Mei and de la Salle and the rapid-decay
inequality. For groups of polynomial growth, ball intersections
give the cutoffs and the volume bound gives the norm comparison.

\begin{remark}\label{rem:scope}
If $f$ is radial, then $c_g$ depends only on $|g|$, but $E_j$ need not
be a union of spheres. These convergence results rearrange
the terms $c_g\lambda_g$, not the blocks
$c_k\lambda(\chi_k)$. Neither radiality nor self-adjointness of $f$
is assumed. The assertions concern subsequences of partial sums;
they do not assert convergence of all partial sums or establish
a rearrangement theorem for whole radial shells.
\end{remark}

\section{Preliminaries}
The group algebra $\mathbb C[\F]$ has convolution and involution
\[
 (r*s)(g)=\sum_{h\in\F}r(gh^{-1})s(h),\qquad
 r^*(g)=\overline{r(g^{-1})}.
\]
Its left regular image is norm dense in $\Cr$. The operators
$(\lambda_g)_{g\in\F}$ are an orthonormal basis of $L^2(\M,\tau)$.
Consequently, for every $f\in L^2(\M,\tau)$,
\begin{equation}\label{eq:parseval}
 \sum_{g\in\F}|\widehat f(g)|^2=\norm{f}{2}^2<\infty,
 \qquad f=\sum_{g\in\F}\widehat f(g)\lambda_g\quad\hbox{in }L^2.
\end{equation}
The inequality $\norm{x}{p}\le\norm{x}{q}$ holds whenever
$1\le p\le q\le\infty$, since $\tau(1)=1$.
Thus $L^p(\widehat{\F})\subset L^2(\widehat{\F})$ when $p\ge2$.
Fourier polynomials are dense in $L^p(\M,\tau)$ for $1<p<\infty$.
Indeed, a continuous functional annihilating them is represented by
some $h\in L^{p'}(\M,\tau)\subset L^1(\M,\tau)$, where
$p'=p/(p-1)$. The normal functional $x\mapsto\tau(h^*x)$ then
vanishes on their ultraweak closure $\M$, so $h=0$; Hahn--Banach
gives the asserted density. We use the usual complex interpolation
identity for tracial noncommutative $L^p$ spaces; see \cite[Section~2]{PX}.
We use
\[
 \mathcal B_N=\{g:|g|\le N\},\qquad
 \mathcal A_N=\{g:N<|g|\le2N\}.
\]
These sets are finite. In the notation of the radial draft,
$\chi_k=\sum_{|g|=k}\delta_g$ and
$\norm{\lambda(\chi_k)}{2}^2=4\cdot3^{k-1}$ for $k\ge1$.
For a radial $f$ with coefficients $c_g=c_k$ on each sphere,
\begin{equation}\label{eq:radialenergy}
 \sum_{g\in\mathcal A_N}|c_g|^2
 =\sum_{k=N+1}^{2N}|c_k|^2\norm{\lambda(\chi_k)}{2}^2.
\end{equation}

We also recall the sufficient Schur factorization criterion. If a
kernel on a set $X$ has a Hilbert-space factorization
$K(x,y)=\langle A(x),B(y)\rangle$, with
$\sup_x\norm{A(x)}{}\sup_y\norm{B(y)}{}\le C$, its Schur multiplier
has completely bounded norm at most $C$; see \cite{HSS}.
Only this sufficient direction is used below.

\section{Smooth finite-support Fourier cutoffs}
We first bound the cutoffs on $\M$ and then obtain convergence in
finite $L^p$ spaces and in operator norm on $\Cr$.
Define the continuously differentiable function
\begin{equation}\label{eq:cutoff}
 F(t)=\begin{cases}
 1,&t\le1,\\
 2(t-1)^3-3(t-1)^2+1,&1\le t\le2,\\
 0,&t\ge2,
 \end{cases}
 \qquad p_N(k)=F(k/N).
\end{equation}
In particular $0\le p_N(k)\le1$ for $k\ge0$. The corresponding
Fourier multiplier is initially defined on Fourier polynomials by
\begin{equation}\label{eq:PN}
 P_N(\lambda_g)=p_N(|g|)\lambda_g.
\end{equation}

\begin{lemma}[Fourier estimate for second differences]\label{lem:fourier}
For $j\in\mathbb Z$, put
\[
 a_N(j)=F(j/N)-F((j+2)/N),\qquad
 w_N(\theta)=\sum_{j\in\mathbb Z}a_N(j)e^{-ij\theta}.
\]
With normalized measure $d\mu(\theta)=d\theta/(2\pi)$ on
$[-\pi,\pi]$, there is an absolute constant $C_0$ such that
\begin{equation}\label{eq:fourierL1}
 \norm{w_N}{L^1(\mu)}\le C_0/N\qquad(N\ge1).
\end{equation}
\end{lemma}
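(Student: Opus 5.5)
We estimate the $L^1$ norm of $w_N$ by the standard splitting into low and high frequencies $|\theta|\le 1/N$ and $|\theta|>1/N$. Pointwise we use the trivial bound $|w_N(\theta)|\le\sum_j|a_N(j)|$. On the high-frequency range we use a bound obtained from summation by parts.

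First we record the basic facts about the sequence $a_N$. Since $F$ is constant outside $[1,2]$, $a_N(j)$ vanishes unless the interval $[j,j+2]$ meets $[N,2N]$, so $a_N$ is supported on $N-2\le j\le 2N$, about $N+3$ indices. Since $F\in C^1$ with $|F'|\le 3/2$, the mean value theorem gives $|a_N(j)|\le 2\sup|F'|/N\le 3/N$. Hence
\[
 \sum_j|a_N(j)|\lesssim 1,\qquad\text{and so}\qquad \int_{|\theta|\le1/N}|w_N|\,d\mu\lesssim 1/N .
\]

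For the high frequencies we take two discrete derivatives. Write $\Delta a(j)=a(j+1)-a(j)$. Summation by parts twice (the sequences have finite support, so there are no boundary terms) gives
\[
 (1-e^{-i\theta})^{2}w_N(\theta)=\sum_j(\Delta^2 a_N)(j)\,e^{-i(j+2)\theta}
\]
up to sign and index shift conventions. Now $a_N(j)=G(j/N)$ with $G(t)=F(t)-F(t+2/N)$, so we estimate $\Delta^2a_N$ through $G''$. The function $F$ is $C^1$ and piecewise $C^2$, with $|F''|\le 6$; $F''$ jumps at $t=1$ and $t=2$ but is bounded. Hence $G'$ is Lipschitz with $|G''|\le 12$. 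More precisely, on most of $[1,2]$ we have $G''(t)=F''(t)-F''(t+2/N)=O(1/N)$, since $F''$ is Lipschitz on $(1,2)$ with $|F'''|=12$. The exceptions are the two windows of length $O(1/N)$ near $t=1$ and $t=2$, where $G''=O(1)$.

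Consequently $|\Delta^2a_N(j)|\lesssim N^{-2}\sup_{\text{local}}|G''|$. This is $O(N^{-3})$ for the $O(N)$ generic indices and $O(N^{-2})$ for the $O(1)$ indices near the breakpoints, so
\[
 \sum_j|\Delta^2a_N(j)|\lesssim N^{-2}.
\]
Since $|1-e^{-i\theta}|\gtrsim|\theta|$ on $[-\pi,\pi]$, this yields $|w_N(\theta)|\lesssim N^{-2}\theta^{-2}$. Therefore
\[
 \int_{1/N<|\theta|\le\pi}|w_N|\,d\mu\lesssim N^{-2}\cdot N= 1/N .
\]
Adding the low-frequency estimate proves \eqref{eq:fourierL1}, and both constants are absolute.

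The main obstacle is the bound $\sum_j|\Delta^2a_N(j)|\lesssim N^{-2}$. A naive count using only $|G''|\le 12$ gives $\sum_j|\Delta^2a_N(j)|\lesssim N\cdot N^{-2}=N^{-1}$. That would only produce $|w_N|\lesssim N^{-1}\theta^{-2}$ and an $L^1$ bound of order $1$, not $1/N$. The gain of one factor $N^{-1}$ must come from the second-difference structure of $a_N$, namely $G$ being a $2/N$-difference of $F$. This requires care at the two points where $F''$ is discontinuous.

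If the bookkeeping there is awkward, we will fall back on an alternative route. Write $a_N=b_N-b_N(\cdot+2)$ with $b_N(j)=F(j/N)-\mathbf 1_{j<N}$-type corrections, so that $w_N(\theta)=(1-e^{2i\theta})\widehat{b}(\theta)$. Then apply the classical estimate $\|\widehat b\|_{L^1}\lesssim$ (support)$^{-1/2}\cdot$(Sobolev norm) via Cauchy--Schwarz and Plancherel: $\|w\|_1\le\|(1+N^2\theta^2)^{-1/2}\|_2\,\|(1+N^2\theta^2)^{1/2}w\|_2$. Here the first factor is $O(N^{-1/2})$, and the second is controlled by $\|a_N\|_{\ell^2}+N\|\Delta a_N\|_{\ell^2}\lesssim N^{-1/2}$. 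This also gives $C_0/N$, using only $|G|\lesssim 1/N$ and $|\Delta a_N|\lesssim N^{-2}$ pointwise on a support of size $O(N)$, and it avoids the breakpoint issue. We will in fact likely present this Cauchy--Schwarz argument as the main proof.
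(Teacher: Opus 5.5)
Your proposal is correct. The weighted Cauchy--Schwarz argument you say you will present as the main proof is in substance the paper's proof. The paper splits at $|\theta|=N^{-1}$. It bounds the low-frequency part by $\mu(\{|\theta|\le N^{-1}\})^{1/2}\|a_N\|_{\ell^2}$ and the high-frequency part by $\|\Delta a_N\|_{\ell^2}\bigl(\int_{|\theta|>N^{-1}}|1-e^{-i\theta}|^{-2}\,d\mu\bigr)^{1/2}$. The inputs are $|a_N(j)|\le 3/N$ and $|\Delta a_N(j)|\le 12/N^2$, which come from $|F'|\le 3/2$ and $F'$ being $6$-Lipschitz. Your single weight $(1+N^2\theta^2)$ handles both pieces at once.

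Your first route is a genuinely different argument, and it is also valid. Controlling $\Delta^2 a_N$ in $\ell^1$ gives the pointwise bound $|w_N(\theta)|\lesssim\min\{1,(N\theta)^{-2}\}$, which is stronger than needed. Your breakpoint bookkeeping is right: $\sum_j|\Delta^2 a_N(j)|$ is at most a constant times $N^{-2}$ times the total variation of $F''$.

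The price of that route is regularity. With $\ell^1$ norms, one difference only gives $|w_N|\lesssim (N|\theta|)^{-1}$, hence an $N^{-1}\log N$ bound. You are therefore forced to a second difference of $a_N$, which is effectively a third difference of $F$, and you need $F''$ of bounded variation. Plancherel recovers the missing factor from the $\ell^2$ norms and needs only $F'$ Lipschitz. That is all the paper assumes of $F$, and the paper reuses the same estimate for the first-difference symbol in the hyperbolic case.

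One small slip: the factorization $w_N(\theta)=(1-e^{2i\theta})\widehat b(\theta)$ cannot hold with a finitely supported $b_N$, because $w_N(0)=\sum_j a_N(j)=2\neq0$. Your weighted Cauchy--Schwarz estimate never uses this factorization and applies directly to $w_N$, so the conclusion is unaffected.
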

\begin{proof}
The derivative $F'$ is bounded by $3/2$ and is Lipschitz with
constant $6$. The sequence $a_N$ is supported in
$\{N-1,\ldots,2N-1\}$, so in particular it vanishes for negative
indices. Extending $F$ to the whole real line as in
\eqref{eq:cutoff} includes the boundary terms in the estimates
\[
 |a_N(j)|\le3/N,\qquad
 |a_N(j)-a_N(j-1)|\le12/N^2.
\]
For the second estimate one may write each difference using the
integral of $F'$ over an interval of length $2/N$ and translate that
interval by $1/N$. Thus
\begin{equation}\label{eq:diffbounds}
 \norm{a_N}{\ell^2(\mathbb Z)}\le3\sqrt2\,N^{-1/2},\qquad
 \norm{\Delta a_N}{\ell^2(\mathbb Z)}\le12\sqrt3\,N^{-3/2},
\end{equation}
where $\Delta a_N(j)=a_N(j)-a_N(j-1)$. Parseval gives
\[
 \norm{w_N}{2}=\norm{a_N}{\ell^2},\qquad
 \norm{(1-e^{-i\theta})w_N}{2}=\norm{\Delta a_N}{\ell^2}.
\]
On $|\theta|\le N^{-1}$, Cauchy--Schwarz bounds the integral of
$|w_N|$ by $(\pi N)^{-1/2}\norm{a_N}{\ell^2}\lesssim N^{-1}$.
On the complement, $|1-e^{-i\theta}|\ge2|\theta|/\pi$, so
\begin{align*}
 \int_{|\theta|>N^{-1}}|w_N|\,d\mu
 &\le\norm{\Delta a_N}{\ell^2}
 \left(\int_{|\theta|>N^{-1}}
          |1-e^{-i\theta}|^{-2}\,d\mu\right)^{1/2}\\
 &\le12\sqrt3\,N^{-3/2}(\pi N/4)^{1/2}\lesssim N^{-1}.
\end{align*}
Combining the regions proves \eqref{eq:fourierL1}.
\end{proof}

\begin{proposition}[Uniform cutoff bound]\label{prop:cutoff}
The maps $P_N$ extend to normal completely bounded maps on $\M$, and
\begin{equation}\label{eq:cb}
 \sup_{N\ge1}\norm{P_N}{\mathrm{cb}}\le C<\infty,\qquad C\ge1.
\end{equation}
For $2\le p<\infty$, they extend consistently to $L^p(\M,\tau)$ with
\begin{equation}\label{eq:lpbound}
 \sup_{N\ge1}\norm{P_N}{L^p\to L^p}\le C^{1-2/p}.
\end{equation}
For every $f\in L^p(\widehat{\F})$,
\begin{equation}\label{eq:cutoffconvergence}
 \norm{P_Nf-f}{p}\longrightarrow0.
\end{equation}
The cutoffs also converge in operator norm for $f\in\Cr$.
\end{proposition}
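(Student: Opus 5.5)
Since $P_N$ is a radial multiplier with symbol $k\mapsto p_N(k)$, the plan is to use the Haagerup--Szwarc / Wysocza\'nski criterion for radial multipliers on free groups. In the form used by Haagerup--Steenstrup--Szwarc \cite{HSS}, a radial function $\varphi(g)=\phi(|g|)$ is a completely bounded Fourier multiplier on $\M$ with
\[
\norm{M_\varphi}{\mathrm{cb}}\le |c_+|+|c_-|+\sum_{k\ge0}(k+1)\,|\phi(k)-\phi(k+2)|,
\]
where $\phi(k)-\phi(k+2)$ is the sequence of second-step differences and $c_\pm=\lim_{k\to\infty}\phi(2k),\ \lim_{k\to\infty}\phi(2k+1)$. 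Its proof goes through an explicit Hilbert-space factorization of the kernel $(x,y)\mapsto\phi(|x^{-1}y|)$, which is where the Schur criterion recalled in the Preliminaries enters. Here $c_\pm=0$, and $a_N(k)=p_N(k)-p_N(k+2)$ is supported in $\{N-1,\dots,2N-1\}$ with $|a_N(k)|\le3/N$. The trace-class (Hankel) quantity is therefore at most $\sum_{k\le2N}(k+1)\cdot 3/N\lesssim N$.

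That bound is not uniform, so the crude estimate fails. The sharper criterion is the correct one: $\phi$ is a cb multiplier iff the Hankel matrix $H=(\phi(i+j)-\phi(i+j+2))_{i,j\ge0}$ is trace class, with norm controlled by $\|H\|_{S_1}$ plus the limits. Lemma~\ref{lem:fourier} is designed for exactly this. Writing $H_{ij}=a_N(i+j)=\int w_N(\theta)e^{i(i+j)\theta}\,d\mu(\theta)$, $H$ is an average of rank-one operators $u_\theta\otimes u_\theta$ with $u_\theta=(e^{ik\theta})_k$. Since $u_\theta$ is not in $\ell^2$, I truncate to the support: only indices $i,j\le 2N$ matter, so $H=\int w_N(\theta)\,v_\theta v_\theta^{T}\,d\mu$ with $v_\theta=(e^{ik\theta})_{0\le k\le2N}$ and $\|v_\theta\|^2=2N+1$. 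Hence
\[
\|H\|_{S_1}\le\norm{w_N}{L^1(\mu)}(2N+1)\le 3C_0 .
\]
The truncation is legitimate because $a_N(i+j)=0$ whenever $i+j>2N$, so restricting to $i,j\le 2N$ loses nothing. This gives \eqref{eq:cb} with a uniform $C$, and $C\ge1$ after enlarging it. Normality follows because $P_N$ has finite support: $P_Nx=\sum_{|g|\le2N}p_N(|g|)\tau(\lambda_g^*x)\lambda_g$ is a finite sum of normal functionals times operators.

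For \eqref{eq:lpbound}, on $L^2$ the map $P_N$ is a diagonal contraction since $0\le p_N\le1$. Complex interpolation between $L^2$ (norm $1$) and $L^\infty=\M$ (norm $\le C$) with $1/p=(1-\theta)/2$, i.e.\ $\theta=1-2/p$, gives $C^{1-2/p}$. Consistency holds because all these maps agree on Fourier polynomials, which are dense. For \eqref{eq:cutoffconvergence} I use the standard $\varepsilon/3$ argument. For a Fourier polynomial $q$ supported in $\mathcal B_{N_0}$, we have $P_Nq=q$ once $N\ge N_0$. Given $f$ and $\varepsilon$, pick $q$ with $\norm{f-q}{p}<\varepsilon$ using the density established in the Preliminaries; then $\norm{P_Nf-f}{p}\le(C^{1-2/p}+1)\varepsilon$. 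The same argument in $\Cr$, where Fourier polynomials are norm dense by definition and $\sup_N\norm{P_N}{}\le C$, gives operator-norm convergence.

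The main obstacle is the first step: making sure the trace-class Hankel criterion, rather than the cruder weighted-$\ell^1$ bound, is what we invoke from \cite{HSS}, and checking that the symbol $p_N$ fits its hypotheses. The relevant form is the Haagerup--Szwarc theorem that $\|M_\phi\|_{\mathrm{cb}}\le\|H\|_{S_1}+|c_+|+|c_-|$. If only the Schur factorization is allowed, I would reprove this directly. The factorization $H=\int w_N v_\theta v_\theta^T\,d\mu$ would be lifted to vectors $A(x),B(y)$ indexed by geodesic prefixes, as in Haagerup's original argument.
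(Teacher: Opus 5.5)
Your proposal is correct and follows essentially the paper's route. You write the second-difference Hankel matrix as $\int w_N(\theta)\,v_\theta v_\theta^{\mathsf T}\,d\mu(\theta)$ over a finite window and use Lemma~\ref{lem:fourier} for a uniform trace-norm bound. The radial Hankel criterion turns that into a uniform cb bound, and interpolation plus density finish the proof.

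The only real difference is that the paper proves the needed sufficiency direction of the Haagerup--Steenstrup--Szwarc criterion inline, while you cite it. If you carry out your fallback, use rays toward a fixed end of the tree, as the paper does, so that $\sum_{\ell\ge0}a_N(d(x,y)+2\ell)$ telescopes exactly to $p_N(d(x,y))$. Finite geodesics to the identity ("geodesic prefixes") would leave a boundary term $-p_N(|x|+|y|+2)$, which does not vanish when $|x|+|y|+2<2N$.
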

\begin{proof}
Consider the finite Hankel matrix $H_N=(a_N(i+j))_{i,j\ge0}$,
supported in its first $2N$ rows and columns. For
$v_\theta=(1,e^{i\theta},\ldots,e^{i(2N-1)\theta})^{\mathsf T}$,
Fourier inversion gives
\[
 H_N=\int_{-\pi}^{\pi}w_N(\theta)
                  v_\theta v_\theta^{\mathsf T}\,d\mu(\theta).
\]
The transpose, rather than the adjoint, produces the required
$e^{i(i+j)\theta}$ in each entry. This rank-one matrix has trace
norm $2N$. Hence Lemma~\ref{lem:fourier} implies
\begin{equation}\label{eq:hankel}
 \norm{H_N}{S_1}\le2N\norm{w_N}{1}\le2C_0.
\end{equation}

Equip $\F$ with the tree metric $d(x,y)=|xy^{-1}|$ and fix an end
of this tree. Let $(x_i)_{i\ge0}$ be the geodesic ray from $x=x_0$
toward this end. A singular-value decomposition factors $H_N$ as
\[
 a_N(i+j)=\langle\alpha_i,\beta_j\rangle,\qquad
 \sum_i\norm{\alpha_i}{}^2=\sum_j\norm{\beta_j}{}^2
 =\norm{H_N}{S_1}.
\]
All but finitely many vectors vanish. Set
\[
 A(x)=\sum_i\delta_{x_i}\otimes\alpha_i,
 \qquad B(y)=\sum_j\delta_{y_j}\otimes\beta_j.
\]
Vertices along each ray are distinct, giving
$\norm{A(x)}{}^2=\norm{B(y)}{}^2=\norm{H_N}{S_1}$.
If the rays first meet at $x_{i_0}=y_{j_0}$, then
$i_0+j_0=d(x,y)$, and their common vertices have indices
$(i_0+\ell,j_0+\ell)$, $\ell\ge0$. Therefore
\begin{align*}
 \langle A(x),B(y)\rangle
 &=\sum_{\ell\ge0}a_N(d(x,y)+2\ell)\\
 &=\sum_{\ell\ge0}\bigl[p_N(d(x,y)+2\ell)
                -p_N(d(x,y)+2\ell+2)\bigr]\\
 &=p_N(d(x,y)).
\end{align*}
This is a parity-preserving telescoping sum, terminating because
$p_N(k)=0$ for $k>2N$. The Schur factorization criterion and
\eqref{eq:hankel} give a completely bounded Schur multiplier of
norm at most $2C_0$. The matrix of $\lambda_g$ is supported on
pairs $x=gy$, for which $d(x,y)=|g|$. The Schur map is normal:
its factorization is $T\mapsto V_A^*(T\otimes I)V_B$, where
$V_A\delta_x=\delta_x\otimes A(x)$ and similarly for $V_B$.
On Fourier polynomials it agrees with the normal finite-rank map
\[
 P_Nx=\sum_{|g|\le2N}p_N(|g|)\tau(\lambda_g^*x)\lambda_g,
 \qquad x\in\M.
\]
Ultraweak density therefore identifies its restriction to $\M$
with this map, proving \eqref{eq:cb}.

Since $0\le p_N(k)\le1$, Parseval and the preceding estimate give
\[
 \norm{P_N}{L^2\to L^2}\le1,\qquad
 \norm{P_N}{\M\to\M}\le C.
\]
Interpolating these bounds proves \eqref{eq:lpbound}.
The extensions have the same finite Fourier formula, since each
coefficient functional is continuous on $L^p$.

Every Fourier polynomial $q$ is fixed by $P_N$ once $N$ exceeds
its largest word length. Given $f\in L^p(\widehat{\F})$,
approximate it in $L^p$ by such a $q$. Then
\[
 \norm{P_Nf-f}{p}\le(C^{1-2/p}+1)\norm{f-q}{p}
\]
for all sufficiently large $N$, proving
\eqref{eq:cutoffconvergence}. For $f\in\Cr$, the same density
argument in operator norm gives $\norm{P_Nf-f}{\infty}\to0$.
\end{proof}

\section[Random selection in finite Lp]{Random selection in finite $L^p$}
The next estimate replaces scalar-functional concentration. Its
constant depends only on the exponent, not on word lengths or the
number of selected terms. Although it is a special case of
noncommutative Khintchine estimates \cite{LP}, we include an
elementary proof for the even exponents needed here.

\begin{lemma}[Rademacher sums of unitaries]\label{lem:signs}
Let $(\mathcal N,\tau)$ be a finite von Neumann algebra with
$\tau(1)=1$. If $U_1,\ldots,U_s$ are unitaries, $d_1,\ldots,d_s$
are complex numbers, and $\varepsilon_i$ are independent Rademacher
variables with $\Prob(\varepsilon_i=1)=\Prob(\varepsilon_i=-1)=1/2$,
then, for every integer $\kappa\ge1$,
\begin{equation}\label{eq:rademacher}
 \E_\varepsilon\norm{\sum_{i=1}^s\varepsilon_i d_iU_i}{2\kappa}^{2\kappa}
 \le(2\kappa-1)!!\left(\sum_{i=1}^s|d_i|^2\right)^\kappa.
\end{equation}
\end{lemma}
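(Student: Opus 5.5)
The plan is to expand the even moment as a trace of a word in the $U_i$, average over the signs, and bound the surviving terms by a count of pair partitions. Put $X=\sum_{i=1}^s\varepsilon_i d_iU_i$. Then $\norm{X}{2\kappa}^{2\kappa}=\tau\bigl((X^*X)^\kappa\bigr)$, and we expand this as a finite multilinear sum. Write $V_i^{(1)}=\overline{d_i}\,U_i^*$ and $V_i^{(0)}=d_iU_i$. Since each $\varepsilon_i$ is real, we get
\[
 \tau\bigl((X^*X)^\kappa\bigr)=\sum_{i_1,\ldots,i_{2\kappa}=1}^{s}
 \varepsilon_{i_1}\cdots\varepsilon_{i_{2\kappa}}\,
 \tau\bigl(V_{i_1}^{(1)}V_{i_2}^{(0)}V_{i_3}^{(1)}\cdots V_{i_{2\kappa}}^{(0)}\bigr),
\]
with the factors alternating between starred and unstarred.

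Next we take the expectation term by term. Since the $\varepsilon_i$ are independent, symmetric and satisfy $\varepsilon_i^2=1$, the quantity $\E[\varepsilon_{i_1}\cdots\varepsilon_{i_{2\kappa}}]$ equals $1$ if every index value occurs an even number of times in $(i_1,\ldots,i_{2\kappa})$, and $0$ otherwise. In particular it is always nonnegative. Each word inside the trace is a scalar $\prod_k d_{i_k}$ (with some entries conjugated) times a product of unitaries. Since $\tau$ is a state, its absolute value is at most $\prod_{k=1}^{2\kappa}|d_{i_k}|$. Hence
\[
 \E\norm{X}{2\kappa}^{2\kappa}\le\sum_{(i_k)\ \text{even}}\ \prod_{k=1}^{2\kappa}|d_{i_k}|,
\]
where the sum runs over tuples in which every value has even multiplicity.

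The combinatorial step is to dominate this sum by a sum over pair partitions. Every tuple with all multiplicities even is compatible with at least one pair partition $\pi$ of $\{1,\ldots,2\kappa\}$, meaning that $i_k=i_l$ whenever $\{k,l\}\in\pi$. So the indicator of such a tuple is at most the number of compatible pairings, and the sum above is bounded by
\[
 \sum_{\pi}\ \sum_{(i_k)\ \text{compatible with}\ \pi}\ \prod_k|d_{i_k}|
 =\sum_{\pi}\prod_{\{k,l\}\in\pi}\Bigl(\sum_{i}|d_i|^2\Bigr)
 =(2\kappa-1)!!\Bigl(\sum_i|d_i|^2\Bigr)^{\kappa}.
\]
The equality holds because, once $\pi$ is fixed, each block of $\pi$ carries one free index and contributes $\sum_i|d_i|^2$. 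The final count uses that there are exactly $(2\kappa-1)!!$ pair partitions of a set of $2\kappa$ elements.

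The only delicate point is this overcounting step. Tuples with multiplicity four or more are compatible with several pairings, and we need the sign expectation to be nonnegative so that overcounting gives an upper bound rather than an error of uncontrolled sign. Both facts hold here, so no cancellation in the trace needs to be exploited.
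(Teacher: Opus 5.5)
Your proof is correct and follows essentially the same route as the paper. The steps match: expand $\tau((X^*X)^\kappa)$, let sign averaging kill tuples with an odd multiplicity, bound each surviving trace by $\prod_k|d_{i_k}|$, and then estimate the resulting scalar sum, which equals $\E_\varepsilon(\sum_i\varepsilon_i|d_i|)^{2\kappa}$.

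The only difference is cosmetic and lies in the last step. The paper uses the multinomial expansion with $(2m)!\ge2^m m!$, while you overcount by pair partitions. These are the same inequality term by term, since a tuple with multiplicities $2m_1,\ldots,2m_s$ has exactly $\prod_i(2m_i-1)!!\ge1$ compatible pairings.
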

\begin{proof}
Write $Y=\sum_i\varepsilon_i d_iU_i$ and expand
$\tau((Y^*Y)^\kappa)$. Sign averaging kills every term in which some
index occurs an odd number of times. Each remaining trace is that
of a product of unitaries and their adjoints, so its absolute
value is at most $1$. Taking absolute values of these terms gives
\[
 \E_\varepsilon\tau((Y^*Y)^\kappa)
 \le\E_\varepsilon\left(\sum_i\varepsilon_i|d_i|\right)^{2\kappa}.
\]
For $a_i=|d_i|$, the multinomial expansion and
$(2m)!\ge2^m m!$ yield
\begin{align*}
 \E_\varepsilon\left(\sum_i\varepsilon_i a_i\right)^{2\kappa}
 &=\sum_{m_1+\cdots+m_s=\kappa}
       \frac{(2\kappa)!}{\prod_i(2m_i)!}\prod_i a_i^{2m_i}\\
 &\le\frac{(2\kappa)!}{2^\kappa}
       \sum_{m_1+\cdots+m_s=\kappa}\frac{\prod_i a_i^{2m_i}}{\prod_i m_i!}\\
 &=\frac{(2\kappa)!}{2^\kappa \kappa!}\left(\sum_i a_i^2\right)^\kappa.
\end{align*}
Since $(2\kappa-1)!!=(2\kappa)!/(2^\kappa \kappa!)$, this proves the claim.
\end{proof}

\begin{lemma}[Bernoulli rounding]\label{lem:rounding}
Let $G$ be a discrete group, let $A\subset G$ be finite, let $c_g\in\mathbb C$ and
$0\le p_g\le1$ for $g\in A$, and let $X_g$ be independent
Bernoulli variables with means $p_g$. Put
\[
 Z_\omega=\sum_{g\in A}(X_g(\omega)-p_g)c_g\lambda_g,
 \qquad V=\sum_{g\in A}|c_g|^2.
\]
For every integer $\kappa\ge1$,
\begin{equation}\label{eq:roundmoment}
 \E\norm{Z_\omega}{2\kappa}^{2\kappa}\le(2\kappa-1)!!V^\kappa.
\end{equation}
In particular, there is a subset $D\subset A$ such that
\begin{equation}\label{eq:roundselection}
 \norm{\sum_{g\in D}c_g\lambda_g-
                  \sum_{g\in A}p_gc_g\lambda_g}{2\kappa}
 \le K_{2\kappa}\sqrt V,
 \qquad K_{2\kappa}=((2\kappa-1)!!)^{1/(2\kappa)}\le\sqrt{2\kappa}.
\end{equation}
For every $t>0$ we also have
\begin{equation}\label{eq:markov}
 \Prob\{\norm{Z_\omega}{2\kappa}>t\}
 \le\frac{(2\kappa-1)!!V^\kappa}{t^{2\kappa}}.
\end{equation}
\end{lemma}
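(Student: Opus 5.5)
The natural route is to reduce \eqref{eq:roundmoment} to Lemma~\ref{lem:signs} by symmetrization. This cannot be a direct application, because the centered variables $X_g-p_g$ are not symmetric. So we introduce an independent copy $(X'_g)$ of the Bernoulli family and set $Z'_\omega=\sum_{g\in A}(X'_g-p_g)c_g\lambda_g$. Since $\E' Z'=0$ and $x\mapsto\norm{x}{2\kappa}^{2\kappa}$ is convex (the norm is convex and $t\mapsto t^{2\kappa}$ is convex and increasing on $[0,\infty)$), Jensen's inequality applied conditionally on $X$ gives
\[
\E\norm{Z}{2\kappa}^{2\kappa}\le\E\norm{Z-Z'}{2\kappa}^{2\kappa}
=\E\norm{\sum_{g\in A}(X_g-X'_g)c_g\lambda_g}{2\kappa}^{2\kappa}.
\]
Each difference $\xi_g=X_g-X'_g$ is symmetric, and the $\xi_g$ are independent. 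Hence $(\xi_g)$ has the same joint law as $(\varepsilon_g\xi_g)$, where the $\varepsilon_g$ are independent Rademacher signs independent of everything else.

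Conditioning on $\xi$, we apply Lemma~\ref{lem:signs} with unitaries $U_g=\lambda_g$ and coefficients $d_g=\xi_gc_g$. Because $|\xi_g|\le1$, this gives
\[
\E_\varepsilon\norm{\cdot}{2\kappa}^{2\kappa}\le(2\kappa-1)!!\Bigl(\sum_g\xi_g^2|c_g|^2\Bigr)^\kappa\le(2\kappa-1)!!V^\kappa.
\]
Averaging over $\xi$ yields \eqref{eq:roundmoment}.

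If one wants to avoid Jensen on noncommutative norms, the expansion in the proof of Lemma~\ref{lem:signs} can be imitated directly. In that version, terms containing an index exactly once vanish because $\E(X_g-p_g)=0$. However, higher centered moments of Bernoulli variables only satisfy $|\E(X_g-p_g)^m|\le\E(X_g-p_g)^2\le1$, and the combinatorics would need care. For that reason the symmetrization route is the one I would use.

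For \eqref{eq:roundselection}, note that $\min_\omega\norm{Z_\omega}{2\kappa}^{2\kappa}$ is at most the expectation. So some outcome $\omega$ satisfies $\norm{Z_\omega}{2\kappa}\le K_{2\kappa}\sqrt V$, and we take $D=\{g:X_g(\omega)=1\}$. Then $Z_\omega=\sum_{g\in D}c_g\lambda_g-\sum_{g\in A}p_gc_g\lambda_g$, which is exactly the quantity in \eqref{eq:roundselection}. The bound $K_{2\kappa}\le\sqrt{2\kappa}$ follows from $(2\kappa-1)!!\le(2\kappa)^\kappa$, since each of the $\kappa$ odd factors is below $2\kappa$. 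Finally, \eqref{eq:markov} is Markov's inequality applied to the nonnegative variable $\norm{Z_\omega}{2\kappa}^{2\kappa}$ at level $t^{2\kappa}$.

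The main obstacle is justifying the symmetrization step cleanly. This requires convexity of $\norm{\cdot}{2\kappa}^{2\kappa}$ on the finite-dimensional span of $\{\lambda_g:g\in A\}$, together with Fubini over the finitely many outcomes. Both are routine because every sum involved is finite.
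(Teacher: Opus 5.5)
Your proposal is correct and follows the paper's own proof essentially step for step. Both arguments symmetrize with an independent copy and use convexity of $x\mapsto\|x\|_{2\kappa}^{2\kappa}$, rewrite the symmetric differences $X_g-X'_g$ in law as $\varepsilon_g\xi_g$, apply Lemma~\ref{lem:signs} conditionally using $\xi_g^2\le1$, and finish by picking an outcome no worse than the expectation and applying Markov's inequality.
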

\begin{proof}
Let $(X'_g)$ be an independent copy. Conditional expectation
over this copy gives $Z_\omega=\E_{X'}\sum_g(X_g-X'_g)c_g\lambda_g$.
Convexity of $y\mapsto\norm{y}{2\kappa}^{2\kappa}$ implies
\[
 \E_X\norm{Z_\omega}{2\kappa}^{2\kappa}
 \le\E_{X,X'}\norm{\sum_g(X_g-X'_g)c_g\lambda_g}{2\kappa}^{2\kappa}.
\]
The variables $X_g-X'_g$ are independent and symmetric, with
values $0,1,-1$. Their joint distribution is that of
$\varepsilon_g\delta_g$, where the $\varepsilon_g$ are mutually
independent Rademacher variables, independent of all the independent
Bernoulli variables $\delta_g$ with
$\Prob(\delta_g=1)=2p_g(1-p_g)$.
Condition on the $\delta_g$ and apply Lemma~\ref{lem:signs}:
\[
 \E_\varepsilon\norm{\sum_g\varepsilon_g\delta_gc_g\lambda_g}{2\kappa}^{2\kappa}
 \le(2\kappa-1)!!\left(\sum_g\delta_g^2|c_g|^2\right)^\kappa
 \le(2\kappa-1)!!V^\kappa.
\]
This proves \eqref{eq:roundmoment}. At least one outcome has its
$2\kappa$-th power norm no larger than its expectation; its selected
indices give $D$. The estimate for $K_{2\kappa}$ follows by bounding
each factor in $(2\kappa-1)!!$ by $2\kappa$. Finally,
\eqref{eq:markov} is Markov's inequality applied to the
nonnegative variable $\norm{Z_\omega}{2\kappa}^{2\kappa}$.
\end{proof}

\begin{remark}
The quantity $V$ is the coefficient energy, not the actual
Bernoulli variance $\sum_g p_g(1-p_g)|c_g|^2$. The proof uses
randomness before taking absolute bounds: sign averaging is
performed inside the trace expansion. No self-adjointness of
$Z_\omega$ is required.
\end{remark}

\begin{corollary}[Selection on a transition annulus]\label{cor:annulus}
For $f\in L^2(\widehat{\F})$, $N\ge1$, and an integer $\kappa\ge1$,
there is a subset
$D\subset\mathcal A_N$ such that
\begin{equation}\label{eq:annulus}
 \norm{\sum_{g\in\mathcal B_N\cup D}\widehat f(g)\lambda_g-P_Nf}{2\kappa}
 \le K_{2\kappa}\left(\sum_{g\in\mathcal A_N}|\widehat f(g)|^2\right)^{1/2}.
\end{equation}
\end{corollary}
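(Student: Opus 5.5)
The corollary should follow by writing $P_Nf$ as an explicit finite Fourier sum and then applying Lemma~\ref{lem:rounding} on the annulus. By Proposition~\ref{prop:cutoff}, $P_N$ acts on $f\in L^2$ through the finite Fourier formula $P_Nf=\sum_{|g|\le 2N}p_N(|g|)\widehat f(g)\lambda_g$. For $L^2$ this is immediate from Parseval \eqref{eq:parseval}, since $P_N$ is a contraction on $L^2$ that is diagonal in the orthonormal basis $(\lambda_g)$. We have $p_N(k)=F(k/N)=1$ for $k\le N$, and $p_N(k)=0$ for $k\ge 2N$. Hence
\[
 P_Nf=\sum_{g\in\mathcal B_N}\widehat f(g)\lambda_g
 +\sum_{g\in\mathcal A_N}p_N(|g|)\widehat f(g)\lambda_g .
\]
The terms with $|g|=2N$ lie in $\mathcal A_N$ and carry weight $p_N(2N)=0$, which is harmless.

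Next, apply Lemma~\ref{lem:rounding} with $G=\F$, $A=\mathcal A_N$ (a finite set), $c_g=\widehat f(g)$, and $p_g=p_N(|g|)\in[0,1]$. Then $V=\sum_{g\in\mathcal A_N}|\widehat f(g)|^2$. Estimate \eqref{eq:roundselection} produces $D\subset\mathcal A_N$ with
\[
 \Bigl\|\sum_{g\in D}\widehat f(g)\lambda_g-\sum_{g\in\mathcal A_N}p_N(|g|)\widehat f(g)\lambda_g\Bigr\|_{2\kappa}\le K_{2\kappa}\sqrt V .
\]

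Finally, add $\sum_{g\in\mathcal B_N}\widehat f(g)\lambda_g$ to both sums inside the norm. Since $\mathcal B_N$ and $\mathcal A_N$ are disjoint, the first sum becomes $\sum_{g\in\mathcal B_N\cup D}\widehat f(g)\lambda_g$ and the second becomes $P_Nf$. The norm is unchanged, and this is exactly \eqref{eq:annulus}. All quantities involved are Fourier polynomials, so the $L^{2\kappa}$ norms are finite and no integrability assumption on $f$ beyond $L^2$ is needed.

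The only step needing any care is identifying $P_Nf$ with the finite sum for a general $f\in L^2$, rather than only for Fourier polynomials. I would handle it through coefficient continuity as in the proof of Proposition~\ref{prop:cutoff}. The rest is direct bookkeeping.
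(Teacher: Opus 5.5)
Your proposal is correct and matches the paper's proof: apply Lemma~\ref{lem:rounding} on $A=\mathcal A_N$ with $c_g=\widehat f(g)$ and $p_g=p_N(|g|)$, and note that the weight-one terms on $\mathcal B_N$ cancel in the difference. Your extra step, identifying $P_Nf$ with its finite Fourier sum for general $f\in L^2$, is the observation recorded in the proof of Proposition~\ref{prop:cutoff}.
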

\begin{proof}
Apply Lemma~\ref{lem:rounding} with $A=\mathcal A_N$ and
$p_g=p_N(|g|)$. The terms in $\mathcal B_N$ have weight $1$
and cancel from the difference.
\end{proof}

\section{Operator-norm control of the rounding error}
All logarithms in this section are natural. For a Fourier polynomial
$y$, put
\[
 \Pi_k y=\sum_{|g|=k}\widehat y(g)\lambda_g.
\]
We say that $y$ has degree at most $L$ if its Fourier coefficients
vanish for $|g|>L$. Haagerup's inequality
\cite[Lemma~1.4]{Ha79} gives
$\norm{\Pi_k y}{\infty}\le(k+1)\norm{\Pi_k y}{2}$. Thus, for
$y$ of degree at most $L$, Cauchy--Schwarz and orthogonality give
\begin{equation}\label{eq:ballRD}
 \norm{y}{\infty}
 \le\sum_{k=0}^L(k+1)\norm{\Pi_k y}{2}
 \le\left(\sum_{k=0}^L(k+1)^2\right)^{1/2}\norm{y}{2}
 \le(L+1)^{3/2}\norm{y}{2}.
\end{equation}

\begin{lemma}[Comparison with a high moment]\label{lem:highmoment}
If $z$ is a Fourier polynomial of degree at most $D\ge1$, then
for every integer $m\ge1$,
\begin{equation}\label{eq:highmoment}
 \norm{z}{\infty}\le(2mD+1)^{3/(4m)}\norm{z}{4m}.
\end{equation}
In particular, for $m=\lceil\log(D+1)\rceil$,
\begin{equation}\label{eq:logmoment}
 \norm{z}{\infty}\le e^{3/2}\norm{z}{4m}.
\end{equation}
\end{lemma}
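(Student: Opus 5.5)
The plan is to apply the ball rapid-decay bound \eqref{eq:ballRD} to the positive power $y=(z^*z)^m$ and then pass back to $z$ with the $C^*$-identity and the trace formula for $L^{4m}$ norms.

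First, the degree of $y$. The element $z^*$ is a Fourier polynomial of degree at most $D$, because $\widehat{z^*}(g)=\overline{\widehat z(g^{-1})}$ and $|g^{-1}|=|g|$. A product $\lambda_g\lambda_h=\lambda_{gh}$ satisfies $|gh|\le|g|+|h|$. So $(z^*z)^m$, a product of $2m$ factors each of degree at most $D$, has degree at most $L=2mD$.

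Next, apply \eqref{eq:ballRD} to $y$:
\[
 \norm{(z^*z)^m}{\infty}\le(2mD+1)^{3/2}\norm{(z^*z)^m}{2}.
\]
Each side can now be rewritten in terms of $z$.
\begin{itemize}
\item On the left, $z^*z$ is positive, so $\norm{(z^*z)^m}{\infty}=\norm{z^*z}{\infty}^m=\norm{z}{\infty}^{2m}$.
\item On the right, $\norm{(z^*z)^m}{2}^2=\tau\bigl((z^*z)^{2m}\bigr)=\tau(|z|^{4m})=\norm{z}{4m}^{4m}$. Hence $\norm{(z^*z)^m}{2}=\norm{z}{4m}^{2m}$.
\end{itemize}
Combining these gives $\norm{z}{\infty}^{2m}\le(2mD+1)^{3/2}\norm{z}{4m}^{2m}$. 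Taking $2m$-th roots yields \eqref{eq:highmoment} with exponent $3/(4m)$, as stated.

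For \eqref{eq:logmoment}, take $m=\lceil\log(D+1)\rceil\ge1$. We need $(2mD+1)^{3/(4m)}\le e^{3/2}$, which is equivalent to $\log(2mD+1)\le2m$.
\begin{itemize}
\item Since $2mD+1\le 2m(D+1)$, we have $\log(2mD+1)\le\log2+\log m+\log(D+1)$.
\item Since $\log(D+1)\le m$, it suffices to show $\log 2+\log m\le m$.
\item The function $m-\log m$ is nondecreasing for $m\ge1$, with value $1$ at $m=1$, so $m-\log m\ge1>\log2$ for all integers $m\ge1$.
\end{itemize}
This gives $\log(2mD+1)\le2m$, as required.

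The main point is the first step: passing to the positive power $(z^*z)^m$ so that an $L^2$ estimate for $y$ becomes an $L^{4m}$ estimate for $z$. The cost is that the degree grows linearly in $m$, which the $m$-th root absorbs. The only checks needed are that the degree bound $2mD$ uses subadditivity of word length, and that no self-adjointness of $z$ is assumed. The latter is why $z^*z$ is used rather than $z$ itself.
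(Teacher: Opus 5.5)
Your proof is correct and matches the paper's argument: you apply \eqref{eq:ballRD} to $(z^*z)^m$, which has degree at most $2mD$, use the $C^*$-identity together with $\tau((z^*z)^{2m})=\norm{z}{4m}^{4m}$, and for $m=\lceil\log(D+1)\rceil$ check that $2mD+1\le 2m(D+1)\le e^{2m}$. Your monotonicity argument for $m-\log m$ simply spells out the inequality $2m\le e^m$, which the paper states without proof.
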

\begin{proof}
The polynomial $y=(z^*z)^m$ has degree at most $2mD$.
Applying \eqref{eq:ballRD} yields
\[
 \norm{z}{\infty}^{2m}
 =\norm{(z^*z)^m}{\infty}
 \le(2mD+1)^{3/2}\norm{(z^*z)^m}{2}
 =(2mD+1)^{3/2}\norm{z}{4m}^{2m}.
\]
Taking the $2m$-th root proves \eqref{eq:highmoment}.
For the stated choice of $m$, we have $m\ge1$, $D+1\le e^m$,
and $2m\le e^m$. Consequently,
\[
 2mD+1\le2m(D+1)\le2me^m\le e^{2m},
\]
which proves \eqref{eq:logmoment}. The argument does not require
$z$ to be self-adjoint.
\end{proof}

\begin{corollary}[Operator-norm rounding]\label{cor:operatorround}
Let $A\subset\mathcal B_D$ be finite, where $D\ge1$ is an integer.
For any $c_g\in\mathbb C$ and $0\le p_g\le1$, $g\in A$, there
is a subset $S\subset A$ such that
\begin{equation}\label{eq:operatorround}
 \norm{\sum_{g\in S}c_g\lambda_g-
              \sum_{g\in A}p_gc_g\lambda_g}{\infty}
 \le9\left((1+\log(D+1))\sum_{g\in A}|c_g|^2\right)^{1/2}.
\end{equation}
\end{corollary}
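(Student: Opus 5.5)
The plan is to combine the Bernoulli moment bound of Lemma~\ref{lem:rounding} with the high-moment comparison of Lemma~\ref{lem:highmoment}. Put $V=\sum_{g\in A}|c_g|^2$. Let $X_g$ be independent Bernoulli variables with means $p_g$, and let $Z_\omega=\sum_{g\in A}(X_g-p_g)c_g\lambda_g$. Every outcome $Z_\omega$ is a Fourier polynomial supported in $A\subset\mathcal B_D$, so its degree is at most $D$. Lemma~\ref{lem:highmoment} therefore applies to each outcome, with the single deterministic choice $m=\lceil\log(D+1)\rceil$. This gives the pointwise bound
\[
 \norm{Z_\omega}{\infty}\le e^{3/2}\norm{Z_\omega}{4m}
\]
for every $\omega$.

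Next we apply \eqref{eq:roundmoment} with $\kappa=2m$. It yields $\E\norm{Z_\omega}{4m}^{4m}\le(4m-1)!!\,V^{2m}$. Some outcome does no worse than the expectation, so there is an $\omega$ with $\norm{Z_\omega}{4m}\le K_{4m}\sqrt V\le\sqrt{4m}\,\sqrt V$. Let $S=\{g\in A:X_g(\omega)=1\}$. Then
\[
 Z_\omega=\sum_{g\in S}c_g\lambda_g-\sum_{g\in A}p_gc_g\lambda_g,
\]
and combining the two bounds gives
\[
 \norm{\sum_{g\in S}c_g\lambda_g-\sum_{g\in A}p_gc_g\lambda_g}{\infty}
 \le e^{3/2}\cdot 2\sqrt m\,\sqrt V .
\]

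It remains to check the constant. Since $m\le1+\log(D+1)$, the right-hand side is at most $2e^{3/2}\bigl((1+\log(D+1))V\bigr)^{1/2}$. Because $2e^{3/2}\approx8.96<9$, this is exactly \eqref{eq:operatorround}. The only point needing care is that the moment exponent $m$ depends on $D$ alone and not on $\omega$. This lets one selection step serve both estimates, which is why we choose a single outcome via the $4m$-th moment rather than taking expectations of operator norms. Beyond this, I expect no real obstacle: the bound $(4m-1)!!^{1/(4m)}\le\sqrt{4m}$ is already recorded in Lemma~\ref{lem:rounding}. If $V=0$, any $S$ works.
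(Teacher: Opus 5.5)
Your proposal is correct and follows the paper's proof essentially verbatim: with $m=\lceil\log(D+1)\rceil$, apply Lemma~\ref{lem:rounding} with $\kappa=2m$ to select $S$ with $\norm{z}{4m}\le K_{4m}\sqrt V\le2\sqrt{mV}$, then apply Lemma~\ref{lem:highmoment} to gain the factor $e^{3/2}$. Conclude with $2e^{3/2}<9$ and $m\le1+\log(D+1)$.
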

\begin{proof}
Set $m=\lceil\log(D+1)\rceil$ and $V=\sum_{g\in A}|c_g|^2$.
Apply Lemma~\ref{lem:rounding} with $\kappa=2m$. It supplies $S$ for
which the difference $z$ in \eqref{eq:operatorround} satisfies
\[
 \norm{z}{4m}\le K_{4m}\sqrt V\le2\sqrt{mV}.
\]
This difference has degree at most $D$, so
Lemma~\ref{lem:highmoment} gives
\[
 \norm{z}{\infty}\le2e^{3/2}\sqrt{mV}
 \le9\sqrt{(1+\log(D+1))V},
\]
using $2e^{3/2}<9$ and $m\le1+\log(D+1)$.
\end{proof}

\begin{lemma}[Small dyadic annular energies]\label{lem:dyadic}
For $f\in L^2(\widehat{\F})$, define
\[
 V_N=\sum_{N<|g|\le2N}|\widehat f(g)|^2.
\]
Then
\begin{equation}\label{eq:dyadic}
 \sum_{k=0}^\infty V_{2^k}\le\norm{f}{2}^2,
 \qquad \liminf_{k\to\infty}(k+1)V_{2^k}=0.
\end{equation}
In particular,
\begin{equation}\label{eq:logenergy}
 \liminf_{k\to\infty}
       (1+\log(2^{k+1}+1))V_{2^k}=0.
\end{equation}
\end{lemma}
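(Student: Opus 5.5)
The plan has three steps: show that the dyadic annuli are disjoint, use this to bound the sum of their energies by Parseval, and then pass from summability to the $\liminf$ statements.

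\textbf{Disjointness.} For $N=2^k$ the annulus $\mathcal A_{2^k}$ is the set of $g$ with $2^k<|g|\le 2^{k+1}$. Each integer $n\ge 2$ lies in exactly one interval $(2^k,2^{k+1}]$ with $k\ge0$. So the sets $\mathcal A_{2^k}$, $k\ge0$, are pairwise disjoint, and their union is $\{g:|g|\ge2\}$.

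\textbf{Summability.} Disjointness and \eqref{eq:parseval} give
\[
 \sum_{k\ge0}V_{2^k}=\sum_{|g|\ge2}|\widehat f(g)|^2\le\norm{f}{2}^2,
\]
which is the first assertion in \eqref{eq:dyadic}.

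\textbf{The $\liminf$ statements.} I would argue by contradiction. Suppose $\liminf_k (k+1)V_{2^k}>0$. Then there are $\varepsilon>0$ and $k_0$ such that $V_{2^k}\ge\varepsilon/(k+1)$ for all $k\ge k_0$. Since the harmonic series diverges, $\sum_k V_{2^k}=\infty$, which contradicts the summability step. Hence $\liminf_k(k+1)V_{2^k}=0$.

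To obtain \eqref{eq:logenergy}, compare the logarithmic weight with $k+1$. Since $2^{k+1}+1\le 2^{k+2}$, we have
\[
 1+\log(2^{k+1}+1)\le 1+(k+2)\log 2\le 3(k+1).
\]
The weighted quantity in \eqref{eq:logenergy} is therefore at most $3(k+1)V_{2^k}$. Taking $\liminf$ along a subsequence where $(k+1)V_{2^k}\to0$ gives the claim.

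No step here is a real obstacle. The only care needed is to check that the annuli $\mathcal A_{2^k}$ are indeed disjoint, because the indices $N=2^k$ make consecutive intervals $(2^k,2^{k+1}]$ share no points, and to record the explicit constant in the logarithm comparison.
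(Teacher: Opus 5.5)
Your proposal is correct and follows essentially the same route as the paper: disjointness of the dyadic annuli plus Parseval for summability, a harmonic-series contradiction for the $\liminf$, and the comparison $1+\log(2^{k+1}+1)\le 3(k+1)$ for the logarithmic version. Your explicit check of that constant via $2^{k+1}+1\le 2^{k+2}$ fills in a detail the paper leaves to the reader.
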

\begin{proof}
The annuli $\mathcal A_{2^k}$ are disjoint, so Parseval proves
summability. If the first limit inferior were positive, then
$V_{2^k}\ge c/(k+1)$ for some $c>0$ and all sufficiently large
$k$, contradicting divergence of the harmonic series.
Finally, $1+\log(2^{k+1}+1)\le3(k+1)$ for $k\ge0$, which gives
\eqref{eq:logenergy}.
\end{proof}

\section{Construction of the rearrangements}
\begin{proof}[Proof of Theorem~\ref{thm:main}]
Fix $2\le p<\infty$ and $f\in L^p(\widehat{\F})$.
Set $\kappa=\lceil p/2\rceil$, write $c_g=\widehat f(g)$, and define
\[
 T_N=\sum_{|g|>N}|c_g|^2.
\]
By \eqref{eq:parseval}, $T_N\to0$. Proposition~\ref{prop:cutoff}
gives $\norm{P_Nf-f}{p}\to0$. Starting with $n_0=0$,
choose integers $n_j$ recursively so that
\begin{equation}\label{eq:choose}
 n_j>2n_{j-1},\qquad
 \norm{P_{n_j}f-f}{p}\le2^{-j},\qquad
 K_{2\kappa}\sqrt{T_{n_j}}\le2^{-j}.
\end{equation}
For each fixed $j$, these conditions hold for all sufficiently
large $n_j$, so the recursive choice is possible.

Apply Corollary~\ref{cor:annulus} with $N=n_j$ and this fixed $\kappa$.
It gives $D_j\subset\mathcal A_{n_j}$. Put
\[
 E_j=\mathcal B_{n_j}\cup D_j,\qquad
 F_j=\sum_{g\in E_j}c_g\lambda_g.
\]
Since the annular coefficient energy is at most $T_{n_j}$,
\eqref{eq:choose} and \eqref{eq:annulus} show that
\[
 \norm{F_j-f}{p}
 \le\norm{F_j-P_{n_j}f}{2\kappa}+\norm{P_{n_j}f-f}{p}
 \le2^{-j}+2^{-j}=2^{1-j}.
\]
Here we used $p\le2\kappa$ only on the rounding error, which is a Fourier
polynomial; no assumption $f\in L^{2\kappa}$ is needed.
This proves \eqref{eq:rate}. Norm monotonicity gives convergence
in every $L^q$ with $1\le q\le p$.

By construction,
\[
 E_j\subset\mathcal B_{2n_j}\subsetneq
 \mathcal B_{n_{j+1}}\subset E_{j+1}.
\]
They exhaust $\F$ because $n_j\to\infty$ and
$\mathcal B_{n_j}\subset E_j$. Enumerate first $E_1$, then
$E_2\setminus E_1$, then $E_3\setminus E_2$, and so on,
in any order within each finite set. This lists each group
element exactly once, including those whose Fourier coefficient
is zero. With $M_j=|E_j|$, its partial sum at $M_j$ equals $F_j$.
The integers $M_j$ are strictly increasing, and \eqref{eq:main}
follows.
\end{proof}

\begin{proof}[Proof of Theorem~\ref{thm:operator}]
Let $f\in\Cr$ and $c_g=\widehat f(g)$. Since $f\in L^2$, use
the annular energies $V_N$ from Lemma~\ref{lem:dyadic}.
Proposition~\ref{prop:cutoff} gives
$\norm{P_Nf-f}{\infty}\to0$. Recursively choose dyadic integers
$n_j=2^{k_j}$ such that
\begin{equation}\label{eq:operatorchoose}
 \begin{gathered}
 n_j>2n_{j-1},\qquad \norm{P_{n_j}f-f}{\infty}\le2^{-j},\\
 9\sqrt{(1+\log(2n_j+1))V_{n_j}}\le2^{-j},
 \qquad n_0=0.
 \end{gathered}
\end{equation}
The cutoff error is small for all sufficiently large radii, while
\eqref{eq:logenergy} supplies arbitrarily large dyadic radii
satisfying the last error bound. Thus all conditions can be met
at every stage.

Apply Corollary~\ref{cor:operatorround} on
$A=\mathcal A_{n_j}\subset\mathcal B_{2n_j}$ with
$p_g=p_{n_j}(|g|)$. It gives $D_j\subset\mathcal A_{n_j}$.
Set $E_j=\mathcal B_{n_j}\cup D_j$ and
$F_j=\sum_{g\in E_j}c_g\lambda_g$. The terms of length at most
$n_j$ cancel in $F_j-P_{n_j}f$, so
\[
 \norm{F_j-f}{\infty}
 \le9\sqrt{(1+\log(2n_j+1))V_{n_j}}
       +\norm{P_{n_j}f-f}{\infty}
 \le2^{1-j}.
\]
As in the proof of Theorem~\ref{thm:main}, the separated radii
make the sets $E_j$ strictly nested and exhaustive, with
\eqref{eq:sets}. Enumerating their successive differences and
putting $M_j=|E_j|$ proves \eqref{eq:operator} and
\eqref{eq:operatorrate}. Norm monotonicity gives convergence in
every finite $L^q$ along the same subsequence.
\end{proof}

\begin{proof}[Proof of Corollary~\ref{cor:simultaneous}]
Suppose $f\in\bigcap_{2\le q<\infty}L^q(\widehat{\F})$.
Repeat the construction with exponent $2j$ at stage $j$, choosing
\[
 n_j>2n_{j-1},\qquad
 \norm{P_{n_j}f-f}{2j}\le2^{-j},\qquad
 K_{2j}\sqrt{T_{n_j}}\le2^{-j}.
\]
For each $j$ this is possible by Proposition~\ref{prop:cutoff} and
$T_N\to0$. Corollary~\ref{cor:annulus} with $\kappa=j$ gives nested
sets satisfying \eqref{eq:simrate}. For every fixed finite $q$,
eventually $q\le2j$, and then
$\norm{F_j-f}{q}\le\norm{F_j-f}{2j}\le2^{1-j}$.
The same enumeration of the successive differences of these sets
therefore works for all finite $q$. Finally, $\tau(1)=1$ implies
$\M\subset\bigcap_{2\le q<\infty}L^q(\widehat{\F})$.
\end{proof}

\section{A general convergence criterion}\label{sec:criterion}
We now separate the geometric input from the selection argument.
Throughout this section $\Gamma$ is an infinite finitely generated
group with a fixed finite symmetric generating set, and
$B_N=\{g\in\Gamma:|g|\le N\}$. The trace, Fourier coefficients,
Parseval identity, density of Fourier polynomials, and norm
monotonicity are the same as in Section~2, with $\Gamma$ replacing
$\F$. Lemma~\ref{lem:rounding} already holds for every discrete group.

The required hypotheses are as follows. There are constants
$C_0,C_1\ge1$, $s\ge0$, an integer $a\ge2$, and Fourier multipliers
$Q_N(\lambda_g)=q_N(g)\lambda_g$ such that
\begin{equation}\label{eq:generalcutoffs}
 \begin{gathered}
 0\le q_N(g)\le1,\qquad q_N(g)=1\quad(g\in B_N),\\
 \operatorname{supp}q_N\subset B_{aN},\qquad
 \sup_{N\ge1}\norm{Q_N}{\mathrm{cb}}\le C_0,
 \end{gathered}
\end{equation}
where the completely bounded norms are on $L(\Gamma)$, and
\begin{equation}\label{eq:generalRD}
 \norm{y}{\infty}\le C_1(R+1)^s\norm{y}{2}
 \quad\text{if }\operatorname{supp}\widehat y\subset B_R.
\end{equation}
The maps in \eqref{eq:generalcutoffs} are understood as normal
finite-rank Fourier multipliers. The second condition is a polynomial
rapid-decay bound; no special value of $s$ is required.

\begin{proposition}[General convergence criterion]\label{prop:criterion}
If \eqref{eq:generalcutoffs} and \eqref{eq:generalRD} hold, then all
conclusions of Theorem~\ref{thm:groups} hold for $\Gamma$, with this
value of $a$. The finite-$L^p$ and simultaneous finite-$L^q$
conclusions require only \eqref{eq:generalcutoffs}.
\end{proposition}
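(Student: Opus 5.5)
The plan is to transplant the free-group argument almost verbatim, with the radial cutoffs $P_N$ replaced by the multipliers $Q_N$ and Haagerup's inequality replaced by \eqref{eq:generalRD}.

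\emph{Cutoff convergence.} First I will prove the analogue of Proposition~\ref{prop:cutoff} for $Q_N$. Since $0\le q_N\le1$, Parseval gives $\norm{Q_N}{L^2\to L^2}\le1$, and \eqref{eq:generalcutoffs} gives $\norm{Q_N}{\infty\to\infty}\le C_0$. Complex interpolation then yields $\norm{Q_N}{L^p\to L^p}\le C_0^{1-2/p}$ for $2\le p<\infty$. Every Fourier polynomial $q$ is fixed by $Q_N$ once $B_N$ contains its support, because $q_N=1$ on $B_N$. Combining this with density of Fourier polynomials in $L^p$, and in $C_r^*(\Gamma)$ for the operator norm, gives $\norm{Q_Nf-f}{p}\to0$ and $\norm{Q_Nf-f}{\infty}\to0$. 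As before, the extensions are identified with the finite Fourier formula by continuity of the coefficient functionals.

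\emph{Finite $L^p$ and simultaneous statements.} Apply Lemma~\ref{lem:rounding}, which holds for any discrete group. Use $A=B_{aN}\setminus B_N$, $p_g=q_N(g)$ and $\kappa=\lceil p/2\rceil$. This produces $D\subset A$ with
\[
\norm{\textstyle\sum_{g\in B_N\cup D}c_g\lambda_g-Q_Nf}{2\kappa}\le K_{2\kappa}\sqrt{T_N},\qquad T_N=\textstyle\sum_{|g|>N}|c_g|^2 .
\]
Choose $n_j>an_{j-1}$ recursively with both errors at most $2^{-j}$ and set $E_j=B_{n_j}\cup D_j$. Then $B_{n_j}\subset E_j\subset B_{an_j}\subsetneq B_{n_{j+1}}\subset E_{j+1}$. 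Strict inclusion of the balls holds because $\Gamma$ is infinite, so the sphere of radius $an_j+1$ is nonempty. Enumerating successive differences gives (i) and \eqref{eq:groupssets}. For the simultaneous version, use exponent $2j$ at stage $j$, exactly as in Corollary~\ref{cor:simultaneous}. None of this uses \eqref{eq:generalRD}, which proves the last sentence of the proposition.

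\emph{Operator norm.} First generalize Lemma~\ref{lem:highmoment}. If $z$ is supported in $B_D$, then $(z^*z)^m$ is supported in $B_{2mD}$, and \eqref{eq:generalRD} gives
\[
\norm{z}{\infty}\le C_1^{1/(2m)}(2mD+1)^{s/(2m)}\norm{z}{4m}.
\]
With $m=\lceil\log(D+1)\rceil$, the estimate $2mD+1\le e^{2m}$ from before bounds the prefactor by $C_1e^{s}$. Lemma~\ref{lem:rounding} with $\kappa=2m$ then gives a rounding $S\subset A\subset B_{aN}$ with operator-norm error at most $C'\sqrt{(1+\log(aN+1))V_N}$, where $V_N=\sum_{N<|g|\le aN}|c_g|^2$ and $C'=2C_1e^{s}$.

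The main obstacle is the replacement for Lemma~\ref{lem:dyadic}: one needs radii where the logarithmically weighted annular energy is small. I would use the radii $N=a^k$. The annuli $(a^k,a^{k+1}]$ are disjoint, so $\sum_kV_{a^k}\le\norm{f}{2}^2$, and the harmonic-series argument gives $\liminf_k(k+1)V_{a^k}=0$. Since $1+\log(a^{k+1}+1)\le C_a(k+1)$, arbitrarily large radii $n_j=a^{k_j}$ satisfy $C'\sqrt{(1+\log(an_j+1))V_{n_j}}\le2^{-j}$. At the same time, $\norm{Q_{n_j}f-f}{\infty}\le2^{-j}$ holds for all large radii. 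Assembling the sets $E_j$ as before proves (ii) with error at most $2^{1-j}$.
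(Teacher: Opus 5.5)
Your proposal is correct and follows the paper's proof essentially step for step. It uses interpolation and density for $Q_N$, Bernoulli rounding on $B_{aN}\setminus B_N$, the high-moment extension of \eqref{eq:generalRD} with $m=\lceil\log(D+1)\rceil$, and the harmonic-series choice of radii $a^k$. The differences are cosmetic: your prefactor $C_1e^s$ is a valid but slightly weaker bound than the paper's $\sqrt{C_1}e^s$ (since $C_1\ge1$), and your nonempty-sphere remark usefully justifies the strict inclusion $B_{an_j}\subsetneq B_{n_{j+1}}$, which the paper leaves implicit.
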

\begin{proof}
Since $|q_N(g)|\le1$, Parseval gives $\norm{Q_N}{L^2\to L^2}\le1$.
Interpolation and density, exactly as in
Proposition~\ref{prop:cutoff}, give
\begin{equation}\label{eq:generalapprox}
 \begin{aligned}
 \norm{Q_Nf-f}{p}&\longrightarrow0
      &&(f\in L^p(\widehat\Gamma),\ 2\le p<\infty),\\
 \norm{Q_Nf-f}{\infty}&\longrightarrow0
      &&(f\in C_r^*(\Gamma)).
 \end{aligned}
\end{equation}
Indeed, the $L^p$ norms of $Q_N$ are bounded by $C_0^{1-2/p}$,
and $Q_N$ fixes every Fourier polynomial for all sufficiently large
$N$. Write $c_g=\widehat f(g)$ and put
\[
 A_N=B_{aN}\setminus B_N,\qquad
 V_N=\sum_{g\in A_N}|c_g|^2,\qquad
 T_N=\sum_{|g|>N}|c_g|^2.
\]

For fixed finite $p$, set $\kappa=\lceil p/2\rceil$. Choose
$n_j>a n_{j-1}$ so that
\[
 \norm{Q_{n_j}f-f}{p}\le2^{-j},\qquad
 K_{2\kappa}\sqrt{T_{n_j}}\le2^{-j}.
\]
Lemma~\ref{lem:rounding}, applied to $A_{n_j}$ with probabilities
$q_{n_j}(g)$, gives $D_j\subset A_{n_j}$ for which, setting
$E_j=B_{n_j}\cup D_j$ and $F_j=\sum_{g\in E_j}c_g\lambda_g$,
\[
 \norm{F_j-Q_{n_j}f}{p}
 \le\norm{F_j-Q_{n_j}f}{2\kappa}
 \le K_{2\kappa}\sqrt{V_{n_j}}\le2^{-j}.
\]
This proves $\norm{F_j-f}{p}\le2^{1-j}$. As before, the
$L^{2\kappa}$ estimate is used only on the polynomial rounding error.
If $f$ belongs to every finite $L^q$, repeat the choices with
$p=2j$ and $\kappa=j$ at stage $j$. The resulting estimate
$\norm{F_j-f}{2j}\le2^{1-j}$ gives simultaneous convergence.

For the endpoint, first extend the high-moment argument.
If $z$ has degree at most $D\ge1$, then $(z^*z)^m$ has degree
at most $2mD$. Applying \eqref{eq:generalRD} to this power gives
\begin{equation}\label{eq:generalhighmoment}
 \norm{z}{\infty}
 \le C_1^{1/(2m)}(2mD+1)^{s/(2m)}\norm{z}{4m}.
\end{equation}
For $m=\lceil\log(D+1)\rceil$, the estimate
$2mD+1\le e^{2m}$ from Lemma~\ref{lem:highmoment} shows that
the prefactor is at most $\sqrt{C_1}e^s$.
Combining this with Lemma~\ref{lem:rounding}, with $\kappa=2m$,
shows that for any finite $A\subset B_D$ and any probabilities
$p_g\in[0,1]$ there is a subset $S\subset A$ with
\begin{equation}\label{eq:generalrounding}
 \norm{\sum_{g\in S}c_g\lambda_g-
               \sum_{g\in A}p_gc_g\lambda_g}{\infty}
 \le C_*\sqrt{(1+\log(D+1))\sum_{g\in A}|c_g|^2},
 \qquad C_*=2\sqrt{C_1}e^s.
\end{equation}
In particular this applies to $A_N$ with $D=aN$.

The annuli $A_{a^k}=B_{a^{k+1}}\setminus B_{a^k}$ are disjoint.
Consequently,
\[
 \sum_{k\ge0}V_{a^k}\le\norm{f}{2}^2,
 \qquad \liminf_{k\to\infty}(k+1)V_{a^k}=0.
\]
The second assertion follows by the harmonic-series argument in
Lemma~\ref{lem:dyadic}. Since
$1+\log(a^{k+1}+1)\le C_a(k+1)$, it also gives
\begin{equation}\label{eq:generalenergy}
 \liminf_{k\to\infty}
   (1+\log(a^{k+1}+1))V_{a^k}=0.
\end{equation}
For $f\in C_r^*(\Gamma)$, choose $n_j=a^{k_j}$ recursively with
\[
 \begin{gathered}
 n_j>a n_{j-1},\qquad \norm{Q_{n_j}f-f}{\infty}\le2^{-j},\\
 C_*\sqrt{(1+\log(a n_j+1))V_{n_j}}\le2^{-j}.
 \end{gathered}
\]
The approximation error is small at all sufficiently large radii,
and \eqref{eq:generalenergy} supplies arbitrarily large good
geometric radii for the rounding error. Apply
\eqref{eq:generalrounding} to obtain $D_j\subset A_{n_j}$ and
again set $E_j=B_{n_j}\cup D_j$. Then
\[
 \norm{F_j-f}{\infty}
 \le\norm{F_j-Q_{n_j}f}{\infty}
       +\norm{Q_{n_j}f-f}{\infty}\le2^{1-j}.
\]

In each construction, start with $n_0=0$. The separated radii give
$E_j\subset B_{a n_j}\subsetneq B_{n_{j+1}}\subset E_{j+1}$,
and the sets exhaust $\Gamma$. Enumerating their successive
differences, including zero coefficients, and setting $M_j=|E_j|$
proves the required rearrangement statements and
\eqref{eq:groupssets}. Norm monotonicity gives all the stated
lower-exponent consequences.
\end{proof}

\section{Hyperbolic groups and groups of polynomial growth}\label{sec:extensions}
We apply Proposition~\ref{prop:criterion} to the two classes appearing
in Theorem~\ref{thm:groups}.

\subsection{Hyperbolic groups}
We verify \eqref{eq:generalcutoffs} with $a=2$ and
$q_N(g)=F(|g|/N)$, using the same $C^{1,1}$ function $F$ as in
\eqref{eq:cutoff}. The hyperbolic radial multiplier theorem
\cite[Corollary~2.8]{MdS} bounds the completely bounded norm of
this multiplier by a group-dependent constant times the trace
norm of the \emph{first-difference} Hankel matrix
\[
 H_N^{(1)}=
 \bigl(F((i+j)/N)-F((i+j+1)/N)\bigr)_{i,j\ge0}.
\]
The limiting constant in that theorem is zero because the symbol
has finite support. Notice that this matrix uses a shift by $1$,
whereas the tree argument of Section~3 used a shift by $2$.

For completeness, its trace norm is uniformly bounded by the
same elementary Fourier calculation. Set
\[
 b_N(j)=F(j/N)-F((j+1)/N),\qquad
 v_N(\theta)=\sum_{j\in\mathbb Z}b_N(j)e^{-ij\theta}.
\]
The sequence $b_N$ is supported in $\{N,\ldots,2N-1\}$, and
the bounds on $F'$ and its Lipschitz constant give
\[
 \norm{b_N}{\ell^2}\le\tfrac32N^{-1/2},\qquad
 \norm{\Delta b_N}{\ell^2}\le6\sqrt2\,N^{-3/2}.
\]
Splitting the integral at $|\theta|=N^{-1}$, exactly as in
Lemma~\ref{lem:fourier}, yields $\norm{v_N}{L^1(\mu)}\lesssim N^{-1}$.
The rank-one integral representation from the proof of
Proposition~\ref{prop:cutoff}, now with $v_N$ in place of $w_N$,
therefore gives
\[
 \norm{H_N^{(1)}}{S_1}\le2N\norm{v_N}{L^1(\mu)}\lesssim1.
\]
Applying \cite[Corollary~2.8]{MdS} proves
\eqref{eq:generalcutoffs}. The relevant graph metric may be
taken as $d(x,y)=|xy^{-1}|$, which is isometric under inversion
to the usual Cayley graph metric and matches the matrix support
of the left regular representation.

The rapid-decay theorem for hyperbolic groups gives
\begin{equation}\label{eq:hyperbolicRD}
 \norm{y}{\infty}\le C_\Gamma(R+1)^{3/2}\norm{y}{2}
 \quad(\operatorname{supp}\widehat y\subset B_R);
\end{equation}
see \cite{dH} and the formulation \textup{$(H_\bullet)$} in
\cite{Nica}. That formulation is stated for non-elementary
hyperbolic groups. Infinite elementary hyperbolic groups are
virtually cyclic, and their linear ball growth gives a stronger
bound directly by Cauchy--Schwarz. Thus
\eqref{eq:generalRD} holds for every infinite hyperbolic group.
Proposition~\ref{prop:criterion} proves all assertions of
Theorem~\ref{thm:groups} in this case.

\subsection{Groups of polynomial growth}
Suppose that, for some $C_v\ge1$ and $d\ge0$,
\begin{equation}\label{eq:polynomialvolume}
 |B_R|\le C_v(R+1)^d\qquad(R\ge0).
\end{equation}
Word balls in a finitely generated group of polynomial growth
also satisfy uniform doubling:
\begin{equation}\label{eq:doubling}
 |B_{2N}|\le C_d|B_N|\qquad(N\ge1).
\end{equation}
See \cite[p.~54, following Corollary~10]{Tessera} for this
standard consequence of the structure of polynomial-growth groups.
Define
\begin{equation}\label{eq:polynomialcutoff}
 q_N(g)=\frac{|gB_N\cap B_{2N}|}{|B_N|}.
\end{equation}
Clearly $0\le q_N(g)\le1$. If $|g|\le N$, then
$gB_N\subset B_{2N}$, so $q_N(g)=1$. If $q_N(g)\ne0$, write
$ga=b$ with $a\in B_N$ and $b\in B_{2N}$; then
$|g|=|ba^{-1}|\le3N$. Hence this cutoff has the plateau and
support properties in \eqref{eq:generalcutoffs} with $a=3$.

To verify its multiplier bound directly, put
$\xi_N=|B_N|^{-1/2}\mathbf1_{B_N}$ and
$\eta_N=|B_N|^{-1/2}\mathbf1_{B_{2N}}$ in $\ell^2(\Gamma)$.
With our convention on inner products,
$q_N(g)=\langle\eta_N,\lambda_g\xi_N\rangle$. The kernel
matching a Fourier multiplier on the left regular representation
has the factorization
\[
 q_N(xy^{-1})
 =\langle\lambda_{x^{-1}}\eta_N,
                 \lambda_{y^{-1}}\xi_N\rangle.
\]
The Schur factorization criterion therefore gives a normal
completely bounded Fourier multiplier $Q_N$ with
\begin{equation}\label{eq:polynomialcb}
 \norm{Q_N}{\mathrm{cb}}
 \le\norm{\eta_N}{2}\norm{\xi_N}{2}
 =\sqrt{\frac{|B_{2N}|}{|B_N|}}\le\sqrt{C_d}.
\end{equation}
This argument needs neither positive definiteness nor radiality
of $q_N$. It proves all of \eqref{eq:generalcutoffs}.

Finally, for a polynomial supported in $B_R$, the unitary norm
of each $\lambda_g$, Cauchy--Schwarz, and
\eqref{eq:polynomialvolume} give
\begin{equation}\label{eq:polynomialRD}
 \norm{y}{\infty}\le\sum_{g\in B_R}|\widehat y(g)|
 \le\sqrt{|B_R|}\norm{y}{2}
 \le\sqrt{C_v}(R+1)^{d/2}\norm{y}{2}.
\end{equation}
Thus \eqref{eq:generalRD} holds with $s=d/2$.
Proposition~\ref{prop:criterion}, using the disjoint annuli
$B_{3^{k+1}}\setminus B_{3^k}$, proves the polynomial-growth
case of Theorem~\ref{thm:groups}. This completes its proof and,
consequently, that of Corollary~\ref{cor:conjectures}.
\medskip

 {\bf Acknowledgements. } {The authors are grateful to Ben Hayes for helpful discussions and for his suggestion to study  R\'ev\'esz's convergence theory on free groups}.


\begin{thebibliography}{14}
\bibitem{An23} Andreou, D.:
Crossed products of dual operator spaces and a characterization of groups with the approximation property. J. Oper. Theory (2023). doi: http://dx.doi.org/10.7900/jot.2021aug22.2341

\bibitem{BF06}
M.~Bo\.{z}ejko and G.~Fendler,
\emph{A note on certain partial sum operators},
in \emph{Quantum Probability}, Banach Center Publ. \textbf{73},
Polish Academy of Sciences, Institute of Mathematics, Warsaw, 2006,
117--125.

\bibitem{HHP19}
K.~Hamm, B.~Hayes, and A.~Petrosyan,
\emph{Rearranged Fourier Series and Generalizations to Non-Commutative Groups},
2019 13th International Conference on Sampling Theory and Applications
(SampTA), 2019, 1--4.
\href{https://doi.org/10.1109/SampTA45681.2019.9030904}{doi:10.1109/SampTA45681.2019.9030904}.

\bibitem{Hayesetal1}
K.~Hamm, B.~Hayes, and A.~Petrosyan,
\emph{An operator theoretic approach to the convergence of rearranged
Fourier series},
J. Anal. Math. (2021).
\href{https://doi.org/10.1007/s11854-021-0161-8}{doi:10.1007/s11854-021-0161-8}.

\bibitem{MdS}
T.~Mei and M.~de la Salle,
\emph{Complete boundedness of heat semigroups on the von Neumann algebra
of hyperbolic groups},
Transactions of the American Mathematical Society \textbf{369} (2017),
no.~8, 5601--5622.
\href{https://arxiv.org/abs/1405.5178}{arXiv:1405.5178}.

\bibitem{dH}
P.~de la Harpe,
\emph{Groupes hyperboliques, alg\`ebres d'op\'erateurs et un th\'eor\`eme
de Jolissaint},
C.~R. Acad. Sci. Paris S\'er.~I Math. \textbf{307} (1988), no.~14, 771--774.

\bibitem{Nica}
B.~Nica,
\emph{On operator norms for hyperbolic groups},
Journal of Topology and Analysis \textbf{9} (2017), no.~2, 291--296.
\href{https://arxiv.org/abs/1509.08317}{arXiv:1509.08317}.

\bibitem{Tessera}
R.~Tessera,
\emph{Volume of spheres in doubling metric measured spaces and in groups
of polynomial growth},
Bulletin de la Soci\'et\'e Math\'ematique de France \textbf{135} (2007),
no.~1, 47--64.
\href{https://doi.org/10.24033/bsmf.2525}{doi:10.24033/bsmf.2525}.

\bibitem{Ha79}
U.~Haagerup,
\emph{An example of a non nuclear $C^*$-algebra, which has the metric
approximation property},
Inventiones Mathematicae \textbf{50} (1979), 279--293.

\bibitem{PX}
G.~Pisier and Q.~Xu,
\emph{Non-commutative $L^p$-spaces},
in \emph{Handbook of the Geometry of Banach Spaces}, Vol.~2,
North-Holland, 2003, 1459--1517.

\bibitem{HSS}
U.~Haagerup, T.~Steenstrup, and R.~Szwarc,
\emph{Schur multipliers and spherical functions on homogeneous trees},
\href{https://arxiv.org/abs/0908.4424}{arXiv:0908.4424}.

\bibitem{LP}
F.~Lust-Piquard and G.~Pisier,
\emph{Non commutative Khintchine and Paley inequalities},
Arkiv f\"or Matematik \textbf{29} (1991), 241--260.


\bibitem{Ozawa}
N.~Ozawa,
\emph{Weak amenability of hyperbolic groups},
Groups Geom. Dyn. \textbf{2} (2008), 271--280.

\bibitem{Re90}
S.~G.~R\'ev\'esz,
\emph{Rearrangements of Fourier series},
J. Approx. Theory (1990).
\href{https://doi.org/10.1016/0021-9045(90)90076-3}{doi:10.1016/0021-9045(90)90076-3}.
\end{thebibliography}
\end{document}